\documentclass{amsart}
\title{Good reduction and canonical heights of subvarieties}

\author{Benjamin Hutz}
\address{
Department of Mathematics and Computer Science\\
Saint Louis University\\
St. Louis, MO}
\email{hutzba@slu.edu}

\thanks{The author is partially supported by NSF Grant DMS-1415294.}

\subjclass[2010]{
37P30, 
37P35, 
37P55 
}

\keywords{dynamical system, periodic subvariety, canonical height}

\usepackage{color}
\usepackage{amssymb,amsmath,amsthm,fullpage}
\usepackage[all]{xy}  
\usepackage{url}
\usepackage{rotating} 
\usepackage{tikz}
\usepackage{textcomp,listings}
\usepackage{versions}

\definecolor{green}{rgb}{0,0.5,0}
\definecolor{dkgreen}{rgb}{0,0.6,0}
\definecolor{gray}{rgb}{0.5,0.5,0.5}
\definecolor{mauve}{rgb}{0.58,0,0.82}
\lstnewenvironment{python}[1][]{
\lstset{
  frame=single,                   
  language=python,                          
  basicstyle=\ttfamily\small, 
  numbers=left,                             
  numberstyle=\scriptsize\color{black},  
  stepnumber=1,                   
  numbersep=7pt,                  
  backgroundcolor=\color{white},      
  stringstyle=\color{red},
  showspaces=false,               
  showstringspaces=false,         
  showtabs=false,                 
  tabsize=2,                      
  captionpos=b,                   
  breaklines=true,                
  breakatwhitespace=false,        
  rulecolor=\color{black},        
  framexleftmargin=1mm, framextopmargin=1mm, frame=shadowbox, 
  alsoletter={1234567890},
  otherkeywords={\ , \}, \{},
  keywordstyle=\color{blue},       
  stringstyle=\color{mauve},         
  emph={access,and,break,class,continue,def,del,elif ,else,%
  except,exec,finally,for,from,global,if,import,in,i s,%
  lambda,not,or,pass,print,raise,return,try,while},
  emphstyle=\color{black}\bfseries,
  emph={[2]True, False, None, self},
  emphstyle=[2]\color{blue},
  emph={[3]from, import, as},
  emphstyle=[3]\color{blue},
  upquote=true,
  morecomment=[s]{"""}{"""},
  commentstyle=\color{dkgreen},       
  emph={[4]1, 2, 3, 4, 5, 6, 7, 8, 9, 0},
  emphstyle=[4]\color{blue},
  literate=*{:}{{\textcolor{blue}:}}{1}%
  {=}{{\textcolor{blue}=}}{1}%
  {-}{{\textcolor{blue}-}}{1}%
  {+}{{\textcolor{blue}+}}{1}%
  {*}{{\textcolor{blue}*}}{1}%
  {!}{{\textcolor{blue}!}}{1}%
  {(}{{\textcolor{blue}(}}{1}%
  {)}{{\textcolor{blue})}}{1}%
  {[}{{\textcolor{blue}[}}{1}%
  {]}{{\textcolor{blue}]}}{1}%
  {<}{{\textcolor{blue}<}}{1}%
  {>}{{\textcolor{blue}>}}{1},%
}}{}

\definecolor{orange}{rgb}{1,0.65,0.17}

\providecommand{\abs}[1]{\left\lvert#1\right\rvert}

\def\Q{\mathbb{Q}}

\def\P{\mathbb{P}}
\def\A{\mathbb{A}}

\def\F{\mathbb{F}}

\def\O{\mathcal{O}}

\newcommand{\col}{\,{:}\,}
\newcommand{\hhat}{{\hat h}}

 \DeclareMathOperator{\GL}{GL}

\DeclareMathOperator{\Res}{Res}

\theoremstyle{plain}
\newtheorem{thm}{Theorem}[section]
\newtheorem*{thm*}{Theorem}
\newtheorem{lem}[thm]{Lemma}
\newtheorem{prop}[thm]{Proposition}
\newtheorem{cor}[thm]{Corollary}
\theoremstyle{definition}
\newtheorem{defn}[thm]{Definition}

\newtheorem{exmp}[thm]{Example}
\newtheorem*{exmp*}{Example}

\theoremstyle{remark}
\newtheorem*{rem}{Remark}

\excludeversion{code}

\begin{document}
\maketitle

\begin{abstract}
    We bound the length of the periodic part of the orbit of a preperiodic rational subvariety via good reduction information. This bound depends only on the degree of the map, the degree of the subvariety, the dimension of the projective space, the degree of the number field, and the prime of good reduction. As part of the proof, we extend the corresponding good reduction bound for points proven by the author for non-singular varieties to all projective varieties. Toward proving an absolute bound on the period for a given map, we  the bound between the height and canonical height of a subvariety via Chow forms. This gives the existence of a bound on the number of preperiodic rational subvarieties of bounded degree for a given map. An explicit bound is given for hypersurfaces.
\end{abstract}

\section{Introduction}

    Let $K$ be a number field of degree $\nu = [K\col \Q]$ and $f:\P^N \to \P^N$ be a morphism of degree $d$ defined over $K$. Let $X \subseteq \P^N$ be an irreducible projective subvariety of degree $D$ defined over $K$. We say $X$ is \emph{periodic} if there is an integer $n \geq 1$ such that $f^n(X) = X$, and $X$ is \emph{preperiodic} if $f^m$ is periodic for some integer $m \geq 0$.

    In the case that the dimension of $X$ is $0$, i.e., $X$ is a rational point, Morton-Silverman \cite{Silverman7} conjectured the existence of a constant $C(d,\nu,N)$ bounding the number of rational preperiodic points depending only on the degree of $f$, the degree of $K$, and the dimension.
    While little is known about this conjecture in general, adding an additional hypothesis about primes of good reduction yields the existence of a constant $C(d,\nu,\mathfrak{p},N)$ bounding the number of rational preperiodic points, where $\mathfrak{p}$ is a prime of $K$ where $f$ has good reduction; for $N=1$ see \cite{Benedetto2,Silverman7,Zieve}, for $N>1$ see \cite{Hutz2,Pezda}.

    When the dimension of $X>1$, much less is known. By restricting to coordinate-wise univariate polynomial maps, Medvedev-Scanlon are able to classify the fixed subvarieties \cite{MS}. Studying \'etale maps where $X$ has at least one smooth rational point, Bell-Ghioca-Tucker are able to bound the size of the periodic part of the orbit in terms of $\nu,N,\mathfrak{p}$ using $p$-adic methods \cite{BGT}. It should be noted that this is not a bound as in the Morton-Silverman conjecture because it does not bound the number of rational preperiodic varieties, only provides a bound on the period. However, such a bound on the period for points is a key step in obtaining the overall bound for points.
    Furthermore, it is not at all clear that a bound on the number of rational preperiodic subvarieties is possible. If such a bound exists, it would need to at least depend on the degree of the subvariety since for a homogeneous bivariate polynomial $f$ of degree $d$, a map $F:\P^2 \to \P^2$ of the form $F = (f(x,z),f(y,z),z^d)$ has infinitely many fixed curves of the form $(f^n(x),xz^{d^n-1},z^{d^n})$.

    In this article we prove a bound on the periodic part of the orbit that depends only on $(d,s,N,D,\mathfrak{p})$.
    \begin{thm*}(Theorem \ref{thm_period_bound})
        Let $K$ be a number field. Let $f:\P^N \to \P^N$ be a morphism of degree $d$ defined over $K$. Let $X \subset \P^N$ defined over $K$ be an irreducible periodic subvariety of degree $D$ and codimension $t$ for $f$ with minimal period $n$. Let $\mathfrak{p}\in K$ be a prime of good reduction for $f$. If $\deg(f^{\ell}(X)) = \overline{\deg(f^{\ell}(X))}$ for $0 \leq \ell \leq n$, then there exists a constant $C$ depending only on $d,D,N,t,\mathfrak{p}$ such that
        \begin{equation*}
            n \leq C(d,D,N,t,\mathfrak{p}).
        \end{equation*}
    \end{thm*}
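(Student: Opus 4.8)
The plan is to reduce the statement, via Chow forms, to the good reduction period bound for periodic points on a projective variety (the extension of the $\P^N$ bound to arbitrary projective varieties established earlier in the paper), with the hypothesis $\deg(f^{\ell}(X)) = \overline{\deg(f^{\ell}(X))}$ serving precisely to keep reduction modulo $\mathfrak p$ compatible with the Chow embedding. Throughout write $k = \O_K/\mathfrak p$, $q = \# k$, $r = N-t = \dim X$, $X_{\ell} = f^{\ell}(X)$ and $D_{\ell} = \deg X_{\ell}$, so $D_0 = D_n = D$. Since $f$ has good reduction, $\bar f\colon \P^N_k \to \P^N_k$ is a morphism of degree $d$ and reduction commutes with pushforward of cycles provided no component collapses; the hypothesis says exactly that along the orbit reduction does not drop degree. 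Hence $\bar X_{\ell} := \bar f^{\ell}(\bar X)$ has $\deg \bar X_{\ell} = D_{\ell}$, and at the level of Chow forms $\overline{\mathrm{Ch}(X_{\ell})} = \mathrm{Ch}(\bar X_{\ell})$ inside the relevant projective space $\P^{M_\ell}$, $M_\ell = M(N,t,D_\ell)$ (in particular the reductions are not degenerate forms). Consequently $\bar X$ is a periodic subvariety of $\bar f$ of some minimal period $m \mid n$; write $n = m a$ and bound $m$ and $a$ separately, in parallel with the point case.

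\emph{Bounding $m$.} This is a finiteness statement over the residue field. Since $\deg \bar X \le D$, the variety $\bar X$ has a point over $k' = \F_{q^s}$ with $s$ bounded in terms of $N$ and $D$; taking $s$ a bit larger (still bounded in terms of $q, N, D$ by Lang--Weil) we may assume $\bar X(k')$ is Zariski dense in $\bar X$. Every point of the finite set $\P^N(k')$ is $\bar f$-preperiodic with tail-plus-period dividing some $B_0 = B_0(q,N,D)$. Following this dense family of points forward through the cyclically permuted, pairwise distinct reductions $\bar X_0,\dots,\bar X_{m-1}$ forces $m \mid B_0$, so $m \le C_1 = C_1(d,D,N,t,\mathfrak p)$.

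\emph{Bounding $a = n/m$.} Put $g = f^m$. The $a$ subvarieties $X, g(X), \dots, g^{a-1}(X)$ are distinct and each reduces to $\bar X$, so by the hypothesis each has degree exactly $D$; hence their Chow forms lie in $\mathrm{Chow}_{r,D}(\P^N)(K) \subset \P^M(K)$ with $M = M(N,t,D)$, and all of them reduce to the single point $\mathrm{Ch}(\bar X) \in \P^M(k)$. On this orbit $g$ acts by $g_*(g^jX) = d^{mr}\, g^{j+1}X$ as cycles (the restriction of $g$ to each orbit member has degree $d^{mr}$ onto its image, since both have degree $D$). Composing the cycle-pushforward morphism $\mathrm{Chow}_{r,D}(\P^N) \to \mathrm{Chow}_{r,d^{mr}D}(\P^N)$ with the inverse of the $d^{mr}$-th power embedding $W \mapsto d^{mr}W$ therefore yields a map $\psi$, defined over $\O_{\mathfrak p}$ and a morphism near $\mathrm{Ch}(\bar X)$, with $\bar\psi(\mathrm{Ch}(\bar X)) = \mathrm{Ch}(\bar X)$ and $\psi(\mathrm{Ch}(g^jX)) = \mathrm{Ch}(g^{j+1}X)$. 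Thus $\mathrm{Ch}(X)$ is a periodic point of period $a$ lying in the residue disc of the $\bar\psi$-fixed point $\mathrm{Ch}(\bar X)$, and the projective-variety good reduction period bound, applied to $\psi$ on $\mathrm{Chow}_{r,D}(\P^N)$, bounds $a$ in terms of $q$ together with invariants of $\mathrm{Chow}_{r,D}(\P^N)$ — its dimension and degree in $\P^M$ — all of which depend only on $N, t, D$. Crucially, exactly as for points, this bound is insensitive to $\deg\psi$ (which carries the factor $d^{m}$): it is controlled by the local dynamics of $\bar\psi$ at the reduced fixed point. Hence $a \le C_2 = C_2(d,D,N,t,\mathfrak p)$, and $n = ma \le C_1 C_2 =: C(d,D,N,t,\mathfrak p)$.

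\emph{Main obstacle.} The essential difficulty is precisely that cycle-pushforward by $f$ multiplies degree by $d^{r}$, so $f$ does not act on a single Chow variety; this forces the passage to $g = f^{m}$, and the degree hypothesis is used both to keep the relevant suborbit inside $\mathrm{Chow}_{r,D}(\P^N)$ and to guarantee that its Chow points reduce without degeneration. The delicate point is then to check that the projective-variety good reduction bound, applied to the morphism $\psi$ — whose degree grows with $m$ — still produces a bound depending only on $q$ and the fixed geometry of $\mathrm{Chow}_{r,D}(\P^N)$; i.e.\ that, just as in the point case, only the ambient dimension and the residue field enter the ramification factor $n/m$. Making this rigorous (working formally in the residue disc of $\mathrm{Ch}(\bar X)$, or extracting a genuinely $\psi$-stable positive-dimensional subvariety through the orbit) is the step I expect to require the most care.
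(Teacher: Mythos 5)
Your overall strategy --- bound the residue-field period $m$ by finiteness, pass to $g=f^m$ acting on Chow coordinates, and invoke the good-reduction period bound for points on a (possibly singular) projective variety --- is the same as the paper's. But there is a genuine gap at precisely the step you flag as delicate, and that step is the heart of the proof. The pushforward by $g$ sends the Chow variety of degree-$D$ cycles to that of degree-$d^{mr}D$ cycles, and ``composing with the inverse of the $d^{mr}$-th power embedding'' is not a morphism of projective varieties: the inverse is defined only on the closed locus of Chow forms that are perfect $d^{mr}$-th powers, and even after restricting the source to the locus $Y$ where the image Chow form is such a power, one obtains only a map $Y\to\P^M$ that need not carry $Y$ (or any a priori given subvariety through $\mathrm{Ch}(X)$) into itself. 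A map that merely fixes $\mathrm{Ch}(\bar X)$ and is ``a morphism near $\mathrm{Ch}(\bar X)$'' does not meet the hypotheses of Theorem \ref{thm_mrpe}, which requires a global self-morphism $\psi\colon\tilde Y\to\tilde Y$ of an irreducible projective variety with $\psi^{\ast}L\cong L^{\otimes\delta}$, and whose constant depends on the degree of $\tilde Y$ --- so that degree must itself be bounded in terms of $d,D,N,t,\mathfrak p$, not just by invariants of the full Chow variety.

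The paper fills this gap as follows: it cuts out the power locus explicitly by iterated discriminants (resultants of the $(d^{m(N-t)}-1)$-st partials of the symbolic image Chow form), takes the irreducible component $Y_1$ through $\mathrm{Ch}(X)$, on which the induced map $\psi_1\colon Y_1\to\P^M$ has degree $d^{mt}$, and then iterates ``replace $Y$ by the component of $\psi_1(Y)\cap Y_1$ containing $\mathrm{Ch}(X)$''; since the dimension drops strictly at every non-terminating step, after at most $M-1$ steps one reaches an irreducible $\tilde Y_1$ with $\psi_1(\tilde Y_1)=\tilde Y_1$, and B\'ezout bounds $\deg\tilde Y_1$ purely in terms of $d,D,N,t,\mathfrak p$. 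Only then is Theorem \ref{thm_mrpe} applicable (its proof in turn uses Bhatnagar--Szpiro/Fakhruddin to extend $\psi_1^{s}$ to the ambient projective space, with $s$ bounded by Lemma \ref{lem_r_bound}). Two smaller points: your Lang--Weil argument for bounding $m$ is both unnecessary and imprecise (the preperiod-plus-period of points of $\P^N(k')$ is bounded by, not ``dividing,'' a single $B_0$); the paper's Lemma \ref{lem_m} gets the bound on $m$ by simply counting residue classes of maps and of Chow forms. And your claim that the final bound is ``insensitive to $\deg\psi$'' holds only up to the factor $s$ of Proposition \ref{prop_BS}, which does depend on the degree of the map --- harmlessly, since $m$ is already bounded.
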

    The basic idea of the method is to move the problem to an endomorphism of a component of the Chow variety and appeal to the similar bound for points from the author's previous work \cite{Hutz2}. The restriction on the degree can be thought of as primes of good reduction for the subvariety and can only not be satisfied for finitely many primes for a given periodic subvariety. However, this is probably not quite enough for applications to problems such as the dynamical Mordell-Land conjecture. To apply these methods there, a more general way is needed to get a bound on the degree of a subvariety in a cycle so that a bound on the period can be obtained entirely independently of properties of $X$.

    The second part of this article examines the canonical height of a subvariety in terms of its Chow form. Starting with work of Nesterenko \cite{Nesterenko} and Philippon \cite{Philippon3}, heights of elimination forms, a version of the more general Chow form, was introduced in transcendence theory. Philippon continued to study the heights of subvarieties via the heights of associated Chow forms and proved many properties of heights and also canonical heights of abelian varieties \cite{Philippon, Philippon4, Philippon2}. Alternatively, using the arithmetic intersection theory of Gillet-Soul\'e \cite{Gillett}, Faltings defined the height of a subvariety $X$ as the intersection of the fundamental class of $X$ with the first Chern class of the canonical hermitian line bundle on $\P^N$ raised to the power $\dim(X)$ \cite{Faltings}.
    Faltings' height is the arithmetic analog of the degree in algebraic geometry.  Bost-Gillet-Soul{\`e} defined the height as the intersection of the fundamental class of $X$ with the $d$-th Chern class of the canonical quotient hermitian bundle on $\P^N$ \cite{BGS}. The arithmetic intersection theory machinery is quite powerful, and they prove many results on the heights of subvarieties. Furthermore, Zhang is able to prove the Bogomolov conjecture for abelian varieties \cite{Zhang3} with this framework. However, less is done with canonical heights. Gubler studies local height and canonical heights in this framework. In this article, we extend Philippon's approach for abelian varieties to general projective morphisms and prove a number of basic results about the canonical height of subvarieties \cite{Gubler}. We define
    \begin{equation*}
        \hhat(X) = \lim_{n \to \infty}\frac{\deg(X)}{\deg(f^n(X))}\frac{h(f^n(X))}{\deg(f^n)},
    \end{equation*}
    where $h(X)$ is the height of the associated Chow form. We summarize our main results in the following theorem.
    \begin{thm}
        Let $f:\P^N \to \P^N$ be a morphism of degree $d \geq 2$ defined over a number field $K$. Let $X$ be an irreducible subvariety of degree $D$ and codimension $t$ defined over $K$.
        \begin{enumerate}
            \item (Theorem \ref{thm_heigt_const}) If $X$ is a hypersurface, then
                \begin{equation*}
                    \abs{h(f(X)) - d\frac{\deg(f(X))}{\deg(X)}h(X)} \leq C(f,N,D)
                \end{equation*}
                for an explicitly computed constant $C$.
            \item (Theorem \ref{thm_can_height})
                The canonical height converges and satisfies the functional equation
                \begin{equation*}
                    \hhat(f(X)) = \frac{\deg(f)\deg(f(X))}{\deg(X)}\hhat(X).
                \end{equation*}
            \item (Theorem \ref{thm_height_diff}) If $X$ is a hypersurface, then
            \begin{equation*}
                \abs{\hhat(X) - h(X)} \leq \frac{CD}{(d-1)d^{N-t-1}},
            \end{equation*}
            where $C$ is from part (1).
        \end{enumerate}
    \end{thm}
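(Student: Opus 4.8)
The plan is to treat the three parts in order, with (2) and (3) built on (1).

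For (1), I would identify the Chow form of a hypersurface $X = V(F)$ with the defining polynomial $F$ itself (up to the standard linear change of variables on the relevant Chow coordinates, which perturbs the height by an explicit $O(D)$ amount), so that $h(X)$ is the Weil height of the coefficient vector of $F$. Since $f$ is a finite morphism, $f_*[X]$ is the divisor $m\cdot f(X)$ with $m = \deg(f|_X)$, and the projection formula gives $\deg(f_*[X]) = d^{N-1}D$, hence $m\deg f(X) = d^{N-1}D$. The defining polynomial of $f_*[X]$ is $G^m$, where $G$ cuts out $f(X)$; the key step is to realize $G^m$ (up to a nonzero scalar) as the elimination form obtained by eliminating $x$ from the equations $y_if_j(x) - y_jf_i(x) = 0$ together with $F(x) = 0$ --- i.e. the norm $\mathrm{Nm}_f(F)$ of $F$ along $f$ --- and then to invoke the explicit height estimates for such resultant/elimination forms (the ones in Philippon's work and in the preceding sections). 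These give $h(G^m) = d^{N-1}h(F) + O_{f,N,D}(1)$ with an explicit constant. Finally, multiplicativity of the Mahler measure, $M(G^m) = M(G)^m$, together with the two-sided comparison between height and Mahler measure (costing an additive $O(D)$ term), lets me pass from $h(G^m)$ to $m\,h(G) = m\,h(f(X))$; dividing by $m$ and using $m\deg f(X) = d^{N-1}D$ yields (1) with an explicit $C(f,N,D)$. The principal obstacle is precisely here: extracting a genuinely explicit constant with the right (essentially linear) dependence on $D$, and setting up the elimination so that the degrees in the $y$-variables, in the coefficients of $F$, and in the coefficients of $f$ are all visible.

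For (2), I would run a telescoping argument. Write $X_n = f^n(X)$ and $H(Y) = h(Y)/\deg(Y)$; the defining limit rearranges as $\hhat(X) = \deg(X)\lim_{n} H(X_n)/d^n$. Using the codimension-$t$ analogue of (1) (the same resultant estimate, without tracking the constant) and the projection formula $\deg(f_*[Y]) = d^{N-t}\deg(Y)$ --- so $\deg(X_{n+1})/\deg(X_n) = d^{N-t}/\deg(f|_{X_n})$ lies between $d^{-t}$ and $d^{N-t}$ because $1\le\deg(f|_{X_n})\le d^N$ --- one gets
\[
\Bigl|\tfrac{H(X_{n+1})}{d^{n+1}} - \tfrac{H(X_n)}{d^{n}}\Bigr| = \tfrac{1}{d^{n+1}\deg(X_{n+1})}\Bigl|h(f(X_n)) - d\tfrac{\deg f(X_n)}{\deg X_n}h(X_n)\Bigr| \ll \tfrac{1}{d^{n+1}},
\]
so $H(X_n)/d^n$ is Cauchy and $\hhat(X)$ converges. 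The functional equation is then formal: setting $\hat H(\cdot) = \hhat(\cdot)/\deg(\cdot)$, one has $\hat H(f(X)) = d\lim_n H(X_{n+1})/d^{n+1} = d\,\hat H(X)$, which is exactly $\hhat(f(X)) = \frac{d\deg f(X)}{\deg X}\hhat(X)$.

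For (3), I would make the telescoping sum from (2) quantitative in the hypersurface case ($t = 1$). With $a_n = \frac{\deg X}{\deg X_n}\frac{h(X_n)}{d^n}$ (so $a_0 = h(X)$ and $a_n\to\hhat(X)$) one has $a_{n+1} - a_n = \frac{\deg X}{d^{n+1}\deg X_{n+1}}\bigl(h(f(X_n)) - d\frac{\deg f(X_n)}{\deg X_n}h(X_n)\bigr)$. Applying (1) to the hypersurface $X_n$ bounds the parenthesized term, and the identity $\deg X_{n+1} = d^{N-1}\deg X_n/\deg(f|_{X_n})$ controls $\deg X/\deg X_{n+1}$, with the large-$\deg(f|_{X_n})$ regime (which would otherwise hurt) compensated because $f(X_n)$ then has small degree and hence small Chow-form height. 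Bookkeeping these against each other leaves each $|a_{n+1} - a_n|$ bounded by a constant multiple of $C\,\deg(X)\,d^{-(n+1)}d^{-(N-t-1)}$, and summing the geometric series $\sum_{n\ge0}d^{-(n+1)} = 1/(d-1)$ produces the stated bound $|\hhat(X) - h(X)|\le CD/((d-1)d^{N-t-1})$. The delicate point in this last part is keeping the constant from (1) genuinely uniform along the forward orbit, so that its iterated use does not accumulate an error larger than the claimed geometric sum.
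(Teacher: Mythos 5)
Your parts (2) and (3) follow the paper's argument essentially verbatim: the same telescoping of $a_n = \frac{\deg X}{\deg f^n(X)}\frac{h(f^n(X))}{d^n}$, the same one-step height inequality applied along the orbit, and the same geometric series. (Your handling of the degree ratios via $1 \le \deg(f|_{X_n}) \le d^N$ is if anything more careful than the paper's, which at one point silently assumes maximal degree growth $D_i \ge d^{N-t}D_{i-1}$.) For part (1) your setup is also the paper's: eliminate $\overline{x}$ from the system $\{y_if_j(\overline{x})-y_jf_i(\overline{x}),\, F(\overline{x})\}$, control the height of the resulting form, and extract a root or factor to account for the case $\deg f(X) < d^{N-1}D$.

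The gap is in the lower bound of part (1). The explicit resultant estimates you invoke (Wustholz-type coefficient bounds, as in Lemma \ref{lem_Wustholz}) are one-sided: they bound the coefficients of the eliminant from above in terms of the coefficients of $F$ and of $f$, and hence only give an upper bound on $h(f(X))$ in terms of $h(X)$. Your assertion that ``these give $h(G^m) = d^{N-1}h(F) + O_{f,N,D}(1)$'' quietly assumes a two-sided estimate. The lower bound is where most of the paper's work lies: one must first observe that sending the coefficient vector of $F$ to the coefficient vector of the eliminant defines a \emph{morphism} $\phi:\P^{\tau(D)-1}\to\P^{\tau(D')-1}$ of coefficient spaces (Lemma \ref{lem_morphism}, which uses that forward images under the finite flat map $f$ never degenerate), and then apply the two-sided height functoriality for morphisms of projective space, whose lower-bound direction carries an effective Nullstellensatz constant, to get $h(\phi(P)) \ge d^N h(P) - C_2$. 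Without this step --- or an explicit appeal to a genuinely two-sided arithmetic B\'ezout/elimination theorem of Philippon, which you would then still need to make explicit in $f$, $N$, $D$ --- the claimed inequality $\abs{h(f(X)) - d\frac{\deg f(X)}{\deg X}h(X)} \le C(f,N,D)$ is established in only one direction.
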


    The author would like to thank Tom Tucker for many helpful discussions on this topic and comments on an earlier draft and Lucien Szpiro for discussion related to an explicit bound for his result with Bhatnagar \cite{Bhatnagar} for Lemma \ref{lem_r_bound}.

\section{Preliminaries} \label{sect:prelim}
    Let $f:\P^N \to \P^N$ be a morphism of degree $d$. Since $f$ is also proper, the forward image $f(X)$ is a subvariety. Furthermore, if $X$ is pure codimension $t$ and degree $D$, then $f(X)$ is pure codimension $t$ and degree $d^{N-t}D$.  Note that if $X$ is irreducible, then so is $f(X)$. We can explicitly compute forward images via elimination theory.
    \begin{prop}\label{prop_forward_image}
        Let $f:\P^N \to \P^N$ be a morphism of degree $d$ and $X = V(g_1,\ldots,g_k)$ be a subvariety. Let $I \subset K[x_0,\ldots,x_N,y_0,\ldots,y_N]$ be the ideal
        \begin{equation*}
            I = (g_1(\bar{x}),\ldots,g_k(\bar{x}),y_0-f_0(\bar{x}), \ldots, y_N-f_N(\bar{x})),
        \end{equation*}
        where $\bar{x} = (x_0,\ldots,x_N)$.
        Then the elimination ideal $I_{N+1} = I \cap K[y_0,\ldots,y_N]$ is a homogeneous ideal and
        \begin{equation*}
            f(X) = V(I_{N+1}).
        \end{equation*}
    \end{prop}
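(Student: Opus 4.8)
The plan is to identify the affine variety $V(I)$ with the graph of a homogeneous lift of $f$ restricted to the affine cone of $X$, compute the image of the projection onto the $\bar y$-coordinates, and then invoke the closure theorem from elimination theory; the homogeneity of $I_{N+1}$ will follow from a weighted-grading remark. I would write $f=(f_0,\dots,f_N)$ with each $f_j$ homogeneous of degree $d$ and with no common zero other than the origin (which is what it means for $f$ to be a morphism), and let $\widehat X=V(g_1,\dots,g_k)\subseteq\A^{N+1}$ be the affine cone over $X$. Passing to $\overline K$-points, the zero locus $V(I)\subseteq\A^{N+1}_x\times\A^{N+1}_y$ is exactly $\{(\bar a,\bar b):\bar a\in\widehat X,\;\bar b=(f_0(\bar a),\dots,f_N(\bar a))\}$, so the projection $\pi$ onto the $\bar y$-coordinates has image $\pi(V(I))=\{(f_0(\bar a),\dots,f_N(\bar a)):\bar a\in\widehat X\}$.

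Next I would identify this image with the affine cone $\widehat{f(X)}$ over $f(X)$. The origin maps to the origin; for $\bar a\in\widehat X\setminus\{0\}$ the vector $(f_0(\bar a),\dots,f_N(\bar a))$ is nonzero and represents $f([\bar a])\in f(X)$; and conversely, any representative $\lambda(f_0(\bar a),\dots,f_N(\bar a))$ of a point of $f(X)$ equals $(f_0(\mu\bar a),\dots,f_N(\mu\bar a))$ for $\mu^d=\lambda$, using that $\overline K$ contains $d$-th roots and that $\widehat X$ is stable under scaling. Hence $\pi(V(I))=\widehat{f(X)}$, which is already Zariski closed.

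For the homogeneity claim, I would put the $\Z$-grading on $K[x_0,\dots,x_N,y_0,\dots,y_N]$ with $\deg x_i=1$ and $\deg y_j=d$. Each $g_i(\bar x)$ is then homogeneous (of its usual degree) and each $y_j-f_j(\bar x)$ is homogeneous of weighted degree $d$, so $I$ is a homogeneous ideal for this grading, and therefore so is $I_{N+1}=I\cap K[y_0,\dots,y_N]$. Since a polynomial in the $y_j$ alone is homogeneous for the grading $\deg y_j=d$ precisely when it is homogeneous for $\deg y_j=1$, the ideal $I_{N+1}$ is homogeneous in the usual sense and $V(I_{N+1})\subseteq\P^N$ is defined. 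Finally, the closure theorem for elimination ideals identifies $V(I_{N+1})$, as an affine subset of $\A^{N+1}_y$, with $\overline{\pi(V(I))}=\widehat{f(X)}$; passing to the associated projective varieties gives $f(X)=V(I_{N+1})$.

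I expect the parts requiring care to be the bookkeeping in the second step --- showing $\pi(V(I))$ is the entire cone over $f(X)$ and nothing more, with the origin and the rescaling by $d$-th roots handled so that no spurious closure creeps in there, which is exactly where the morphism hypothesis on $f$ is used --- and noticing the weighted grading, since $I$ itself is not homogeneous for the standard grading, so the homogeneity of $I_{N+1}$ is not otherwise apparent.
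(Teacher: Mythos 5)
Your proof is correct and follows essentially the same route as the paper: the weighted grading ($\deg x_i=1$, $\deg y_j=d$) for homogeneity of $I_{N+1}$, the affine-cone picture with $d$-th-root rescaling to identify the projection with the cone over $f(X)$, and the closure theorem to conclude. The only difference is that you work directly with $V(I)$ on the cone rather than passing through the paper's intermediate bi-homogeneous ideal $J$ in $\P^N\times\P^N$, which is a harmless streamlining.
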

    \begin{proof}
        Adapted from \cite[Theorem 8.5.12]{CLO}.

        We first prove that $I_{N+1}$ is homogeneous. Since the polynomials $y_i-f_i(\bar{x})$ are not homogeneous, we introduce weights: each $x_i$ has weight $1$ and $y_i$ has weight $d$. Then $I$ is a weighted homogeneous ideal with degrees $\deg(g_i) = d_i$ and $\deg(y_i-f_i(\bar{x})) = d$. A reduced Groebner basis $G$ with respect to any monomial ordering consists of weighted homogeneous polynomials (\cite[Theorem 8.3.2]{CLO}). For an appropriate lexicographic ordering, $G \cap K[y_0,\ldots,y_N]$ is a basis for $I_{N+1} = I \cap K[y_0,\ldots,y_N]$. Thus, $I$ has a weighted homogeneous basis. Since the basis is in $K[y_0,\ldots,y_N]$, it must also be homogeneous.

        Now we consider the image and work in the product $\P^N \times \P^N$. $I$ is not generated by bi-homogeneous polynomials, so we need to consider the ideal
        \begin{equation*}
            J = (g_1(\bar{x}),\ldots,g_k(\bar{x}), y_if_j(\bar{x}) - y_jf_i(\bar{x})).
        \end{equation*}
        We show that $V(J)$ is the graph of $f(X)$. Let $p \in X$, then $(p,f(p)) \in V(J)$. Conversely, suppose that $(p,q) \in V(J)$. Then we must have $p \in X$ and $q_if(p_j) = q_jf(p_i)$. There is a $j$ with $q_j \neq 0$ and, since $f$ is a morphism, an $i$ with $f(p_i) \neq 0$ so that $q_if_j(p) = q_jf_i(p) \neq 0$ so that $q_i \neq 0$. Let $\lambda = q_i/f_i(p)$ and we have $q = \lambda f(p)$ so that $(p,q)$ is on the graph of $f(X)$. Let $\pi_2:\P^N \times \P^N \to \P^N$ be the projection to the second component. Then \begin{equation*}
            f(X) = \pi_2(V(J)).
        \end{equation*}
        Now we show $\pi_2(V(J)) = V(I_{N+1})$.

        It suffices to work in the affine cone $\A^{N+1} \times \A^{N+1}$ and show that $\pi_2(V(J)) = V(I_{N+1}) \subset \A^{N+1}$. Once we exclude the origin, $q \in \pi_2(V(J))$ if and only if there is some $p \in \A^{N+1}$ such that $q = f(p)$ in $\P^N$ so that there is some $\lambda \neq 0$ such that $q = \lambda f(p)$ in $\A^{N+1}$. If we set $\lambda' = \sqrt[d]{\lambda}$, then $q = f(\lambda'p)$, which is equivalent to $q \in \pi(V(I))$. Since $V(I_{N+1}) \subset \A^{N+1}$ is the smallest variety containing $\pi_2(V(J)) \subset \A^{N+1}$, we have $V(I_{N+1}) = \pi_2(V(J))$.
    \end{proof}
    Since $f$ is flat, the preimage of a degree $D$ pure codimension $t$ subvariety is again a pure codimension $t$ subvariety. We compute the preimage of $X = V(g_1,\ldots,g_k)$ as $f^{-1}(X) =  V(g_1 \circ f,\ldots, g_k \circ f)$ and it has degree $d^tD$. Note that preimages of irreducible varieties are not necessarily irreducible.

\subsection{Chow Forms}
    We will define heights and canonical heights in Section \ref{sect.heights} in terms of the height of the associated Chow form. See \cite[Chapter 3]{GKZ} for more details on Chow forms.

    \begin{defn}
        Let $X$ be an $k$-dimensional subvariety of $\P^N$ of degree $D$. The $(N-k-1)$-dimensional projective subspaces of $\P^N$ meeting $X$ form a hypersurface in the Grassmannian $G(N-k-1,N)$. The homogeneous form defining this hypersurface in Pl\"ucker coordinates is called the \emph{Chow form} of $X$ denoted $Ch(X)$.
    \end{defn}
    The polynomial $Ch(X)$ is degree $D$ and if $X$ is a cycle $\sum n_Y Y$ its Chow form is the hypersurface $\prod Ch(Y)^{n_Y}$. Note that when $X$ is irreducible, its Chow form is the ``forme \'elimiante'' from Philippon \cite{Philippon3}.

    There are explicit algorithms to compute the Chow form of a variety, see for example Dalbec \cite{Dalbec3}.

\section{Bounding the Period} \label{sect.period_bound}
    In this section we show that there is a bound on the period of a subvariety that depends only on the degree of the morphism, the degree of the subvariety, the dimension of the space, the degree of the number field, and a prime of good reduction. The idea is that for a periodic subvariety we can restrict the function to an endomorphism of a subvariety of the Chow variety. Then the problem is about periodic points. Since this restriction may be a singular variety, we need to extend the periodic point theorem from the author's previous work \cite{Hutz2} to singular varieties. An important tool is the ability to extend morphisms of varieties to morphisms of projective space from Bhatnagar-Szpiro and Fakhruddin \cite{Bhatnagar,Fakhruddin}.

    To be able to restrict to a subvariety of the Chow variety, we need to have an upper bound on the degree of an element of the orbit. There are a couple issues that arise here. One is that a given orbit can have elements of different degree within the same orbit.
    \begin{exmp}\label{exmp1}
        For
        \begin{align*}
            f:\P^2(\F_2) &\to \P^2(\F_2)\\
            (x,y,z) &\mapsto (z^2, y^2 + xz + z^2, x^2)
        \end{align*}
        we have the orbit
        \begin{equation*}
            V(y + z) \to V(y^2 + xz) \to V(x + y) \to V(x^2 + y^2 + xz + z^2) \to V(y + z) \to \cdots
        \end{equation*}
    \end{exmp}
    Secondly, there may be finitely many primes where the degree of the reduced subvariety is not the degree of the rational subvariety. If we exclude these primes and have a bound on the period modulo $\mathfrak{p}$, then we have an upper bound on the degree of an element in the orbit.

    Although not explicit, seeing that there is a bound on the reduced period depending only on $\mathfrak{p},d,D,N,t$ is quite simple. Since there are only finitely many residue classes of a map of degree $d$ modulo $\mathfrak{p}$ and only finitely many subvarieties of a given degree $D$, there are only finitely many possible cycles. So there is, in fact, a bound on the period of a subvariety modulo $\mathfrak{p}$ that depends only on $\mathfrak{p},d,D,N,t$.
    \begin{lem} \label{lem_m}
        Let $f:\P^N \to \P^N$ be a morphism of degree $d$ defined over a number field $K$. Let $X \subset \P^N$ be a periodic irreducible subvariety of degree $D$ and codimension $t$ defined over $K$. Let $\mathfrak{p} \in K$ be a prime of good reduction for $f$. Then there exists a constant $m$ depending on $d,N,D,\mathfrak{p},t$ such that the period of $X$ modulo $\mathfrak{p}$ is bounded by $m$.
    \end{lem}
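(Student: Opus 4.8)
The strategy is a finiteness-counting argument: work modulo $\mathfrak{p}$ and observe that the number of possible data (residual maps and residual subvarieties of fixed degree) is finite, so the orbit of any periodic subvariety mod $\mathfrak{p}$ must cycle within a bounded number of steps. First I would let $k = \mathcal{O}_K/\mathfrak{p}$ be the residue field, which is a finite field of some cardinality $q = q(\mathfrak{p})$. Since $f$ has good reduction at $\mathfrak{p}$, it reduces to a morphism $\tilde{f}:\P^N_k \to \P^N_k$ of degree $d$, and the reduction $\tilde{X}$ of $X$ is a subvariety of $\P^N_k$. By Proposition \ref{prop_forward_image} the forward image commutes with reduction in the sense that $\widetilde{f(X)} \supseteq \tilde{f}(\tilde{X})$, and in fact $\tilde f(\tilde X) = \widetilde{f(X)}$ as sets since both equal $\pi_2(V(\tilde J))$; the key point I only need is that the reductions of the subvarieties in the orbit of $X$ are themselves an orbit $\tilde X \to \tilde f(\tilde X) \to \tilde f^2(\tilde X) \to \cdots$ under $\tilde f$.

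Next I would bound the number of subvarieties of $\P^N_k$ that can appear. Each $\tilde f^{\ell}(\tilde X)$ has codimension $t$ and degree dividing $d^{N-t}D \cdot (\text{stuff})$; more carefully, since over the generic fiber $\deg(f^{\ell}(X))$ is determined and reduction can only decrease or preserve degree, each reduced variety in the orbit has degree at most $D' := $ (some explicit quantity depending on $d,D,N,t$, e.g. bounded by $d^{(N-t)n}D$, but since we have not yet fixed $n$ we use instead the cruder bound that all cycles in a periodic orbit are permuted among finitely many, see below). A pure-dimensional subvariety of $\P^N_k$ of codimension $t$ and degree $\le D'$ is cut out by finitely many polynomials of bounded degree over the finite field $k$ — for instance, its Chow form is a polynomial of degree $\le D'$ in the $\binom{N+1}{N-t}$ Plücker coordinates with coefficients in $k$, and there are only finitely many such polynomials, say $B = B(q,N,D',t)$ of them. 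Hence the orbit of $\tilde X$ under $\tilde f$ takes values in a set of size at most $B$.

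Finally, a periodic orbit under a single self-map: if $g$ is any function on a finite set $S$ with $|S| \le B$ and $s \in S$ is $g$-periodic, then its period is at most $B$. Applying this with $g = \tilde f$ acting on the finite set of reduced subvarieties, and with $X$ periodic (hence $\tilde X$ periodic since the reduction of a periodic orbit is a finite orbit permuted cyclically), we conclude the period of $X$ modulo $\mathfrak{p}$ is bounded by $m := B(q(\mathfrak{p}),N,D',t)$, a constant depending only on $d,N,D,\mathfrak{p},t$.

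The main obstacle is making precise the claim that the reduction of a periodic orbit is again a periodic orbit of the same period — a priori the reduced orbit could have smaller period, which is fine for the bound, but one must be sure the reduced varieties genuinely form an $\tilde f$-orbit and that $\tilde X$ itself recurs. This follows from functoriality of forward images under reduction (Proposition \ref{prop_forward_image} applied over $\mathcal{O}_{K,\mathfrak{p}}$) together with the fact that $f^n(X) = X$ implies $\tilde f^n(\tilde X) = \widetilde{f^n(X)} = \tilde X$. The only subtlety in the counting is pinning down a uniform degree bound $D'$ for the reduced varieties that does not secretly depend on $n$; this is handled because in a periodic orbit there are only finitely many distinct varieties $f^{\ell}(X)$ for $0 \le \ell < n$, each of degree $d^{(N-t)\ell}D$ — but rather than bound $\ell$, we note it suffices that \emph{each individual} $\tilde f^{\ell}(\tilde X)$ lies among subvarieties of $\P^N_k$ of codimension $t$ whose degree is a power of $d$ times $D$ that recurs, so the whole orbit lands in the (finite) set of all codimension-$t$ degree-$d^j D$ subvarieties for $j$ ranging over the finitely many values actually attained; closing this loop rigorously is the one place requiring care, but presents no real difficulty since everything in sight is finite once $\mathfrak{p}$ is fixed.
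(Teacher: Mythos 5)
Your overall strategy --- reduce modulo $\mathfrak{p}$ and count --- is the same as the paper's, and you correctly flag the one dangerous point, but your proposed way of closing it does not work. The pigeonhole version of the argument requires a single finite set $S$ of subvarieties of $\P^N_k$, determined by $d,N,D,t,\mathfrak{p}$ alone, that is guaranteed to contain the entire reduced orbit. Your candidate for $S$ is ``all codimension-$t$ subvarieties of degree $d^jD$ for the finitely many values of $j$ actually attained.'' But which values of $j$ are attained depends on the orbit of $X$ itself --- indeed on its period $n$, since the degree can climb as high as $d^{(N-t)(n-1)}D$ before the cycle closes (the paper's Example \ref{exmp1} shows the degree genuinely oscillating along a cycle) --- so the resulting bound depends on $X$ and the argument is circular. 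The fallback claim that ``everything in sight is finite once $\mathfrak{p}$ is fixed'' is false: over a finite field there are infinitely many codimension-$t$ subvarieties once the degree is unbounded (e.g.\ hypersurfaces of arbitrary degree), so a uniform degree bound is genuinely needed and you have not supplied one.

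The paper closes the loop by parametrizing cycles by their initial data rather than by their elements. The cycle through $\tilde X$ is completely determined by the pair $(\tilde f,\tilde X)$; there are only finitely many residue classes of degree-$d$ morphisms $\tilde f$ of $\P^N_k$, and only finitely many codimension-$t$ subvarieties $\tilde X$ of degree at most $D$ over $k$ (counting Chow-form coefficients, as you do). For each such pair for which $\tilde X$ happens to be $\tilde f$-periodic, the cycle has some finite length; take $m$ to be the maximum over this finite set of pairs. No bound on the degrees of the intermediate varieties $\tilde f^{\ell}(\tilde X)$ is ever required, and the constant depends only on $d,N,D,t,\mathfrak{p}$. (Your reduction step --- that $f^n(X)=X$ forces $\tilde f^n(\tilde X)=\tilde X$ via compatibility of forward images with reduction at a prime of good reduction --- is also used implicitly by the paper and is fine.)
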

    \begin{proof}
        Given $D,t,N,\mathfrak{p}$, there are only finitely many subvarieties defined over the residue field of degree at most $D$ and codimension $t$ (count for example the coefficients of the Chow form). Also, given $d,N,\mathfrak{p}$, there are only finitely many residue classes of morphisms defined over the residue field of degree $d$. Since $X$ is assumed periodic, its reduction modulo $\mathfrak{p}$ must be one of finitely many possible cycles.
    \end{proof}
    \begin{exmp}
        As an example consider $d=2,K=\Q,p=2,D=1$. To make the computational feasible we assume that the degree of an element in the orbit of a degree $1$ subvariety is $\leq 8$. Then $m=7$.
    \end{exmp}
    If we assume that $\deg(f^n(X)) = \overline{\deg(f^n(X))}$, then Lemma \ref{lem_m} also gives a bound on the degree of the subvarieties in the orbit of $X$ as $d^{m(N-t)}D$. In particular, we can embedded every element of the orbit of $X$ into the same Chow variety and look at the induced morphism. This restriction requires only excluding finitely many additional primes (those which divide the discriminant of the Chow forms of the varieties in the orbit of $X$).

    We will need to restrict to a particular subvariety of the Chow variety, but first we extend the bound on the period for points using good reduction information to morphisms of possibly singular varieties. We do this is two different ways to arrive at two different bounds. First we use a result of Bhatnagar-Szpiro \cite{Bhatnagar}.
    \begin{prop}{\cite[Theorem 1]{Bhatnagar}} \label{prop_BS}
        Let $X$ be a projective variety defined over an infinite field $K$, $L$ a very ample line bundle on $X$ and $f:X \to X$ a morphism such that $f^{\ast}L = L^{\otimes d}$ for some integer $d \geq 2$. Then there exists a positive integer $s$ and a morphism $\phi:\P^m \to \P^m$ extending $f^s$, where $m+1 = \dim_K H^0(X,L)$. Moreover, if the linear system $H^0(X,L)$ is complete, then $f$ extends to $\P^m$.
    \end{prop}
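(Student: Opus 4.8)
The plan is to use the very ample bundle $L$ to realize $X$ inside $\P^m$, reinterpret the polarized self-map as the morphism attached to a subsystem of a multiple of the hyperplane class, lift that data to homogeneous forms on $\P^m$, and then kill the base points the lift may acquire along $\P^m\setminus X$. I expect that last step to be the main obstacle; it is also the only place where the hypothesis that $K$ is infinite is used.

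First I would use $|L|$ to embed $\iota\colon X\hookrightarrow\P^m$ with $\O_X(1)\cong L$ and $m+1=\dim_K H^0(X,L)$, and fix a basis $x_0,\dots,x_m$ of $H^0(X,L)$, so the $x_i$ are restrictions of the coordinates on $\P^m$ and have no common zero on $X$. For every $s\ge 1$ one has $(f^s)^{\ast}L\cong L^{\otimes d^s}\cong\O_X(d^s)$, so $\sigma_i:=(f^s)^{\ast}x_i\in H^0(X,\O_X(d^s))$, and since $f^s$ is an everywhere-defined morphism of $X$ the sections $\sigma_0,\dots,\sigma_m$ have no common zero on $X$. Since the composite $X\xrightarrow{f^s}X\xrightarrow{\iota}\P^m$ is exactly the morphism attached to $(\O_X(d^s);\sigma_0,\dots,\sigma_m)$, it suffices to produce global sections $F_0,\dots,F_m$ of $\O_{\P^m}(d^s)$ restricting to the $\sigma_i$ and having no common zero anywhere on $\P^m$.

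For the lifting I would invoke that the restriction map $H^0(\P^m,\O(n))\to H^0(X,\O_X(n))$ is surjective for all $n\gg 0$ (Serre vanishing for $\mathcal{I}_X(n)$, equivalently Castelnuovo--Mumford regularity of $\mathcal{I}_X$) and that $V\bigl(I(X)_n\bigr)=X$ once $n$ is at least the largest degree of a generator of the homogeneous ideal $I(X)$; as $d\ge 2$, I then choose $s$ with $d^s$ past both thresholds and pick homogeneous $F_i$ of degree $d^s$ with $F_i|_X=\sigma_i$. The map $(F_0:\cdots:F_m)$ restricts to $\iota\circ f^s$ on $X$, but a priori its base locus is only known to miss $X$, and the hard part is to adjust the $F_i$ --- without disturbing their restriction to $X$ --- so they have no common zero on all of $\P^m$. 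I would take a $K$-basis $g_1,\dots,g_M$ of $I(X)_{d^s}$ (so $V(F_0,\dots,F_m,g_1,\dots,g_M)=V(F_0,\dots,F_m)\cap X=\varnothing$), and for $b=(b_{ik})\in\A^{(m+1)M}$ set $\tilde F_i=F_i+\sum_k b_{ik}g_k$, which still satisfies $\tilde F_i|_X=\sigma_i$ for all $b$. On the incidence variety $W=\{(b,p)\in\A^{(m+1)M}\times\P^m : \tilde F_0(p)=\cdots=\tilde F_m(p)=0\}$, projection to $\P^m$ has empty fiber over each $p\in X$ (there $\tilde F_i(p)=F_i(p)$, and these have no common zero on $X$), while over $p\notin X$ some $g_k(p)\ne 0$ makes each equation $\tilde F_i(p)=0$ a nontrivial affine-linear condition on the block $(b_{ik})_k$, so the fiber has codimension exactly $m+1$. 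Hence $\dim W\le m+\bigl((m+1)M-(m+1)\bigr)<(m+1)M$, the projection $W\to\A^{(m+1)M}$ is not dominant, and its image omits a nonempty Zariski-open subset of $\A^{(m+1)M}$, which has a $K$-point since $K$ is infinite. For such $b$ the $\tilde F_i$ have no common zero in $\P^m$, so $\phi:=(\tilde F_0:\cdots:\tilde F_m)\colon\P^m\to\P^m$ is a morphism of degree $d^s$ with $\phi\circ\iota=\iota\circ f^s$, i.e.\ $\phi$ extends $f^s$.

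Finally, for the ``moreover'' part, the completeness hypothesis is precisely what guarantees that the relevant multiplication maps are already surjective and $I(X)$ generated in low enough degree that one may take $s=1$: I would lift $f^{\ast}x_0,\dots,f^{\ast}x_m$ to degree-$d$ forms and clear the base locus off $X$ as above, extending $f$ itself. I anticipate the lifting step and the translation of $f^s$ into $|\O_X(d^s)|$-data to be routine, and the base-locus removal along $\P^m\setminus X$ --- the step forcing $K$ to be infinite --- to be the crux.
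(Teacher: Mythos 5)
Your argument is correct and is essentially the argument of Bhatnagar--Szpiro that this proposition quotes: the paper itself gives no proof (it cites \cite[Theorem 1]{Bhatnagar}), but the two thresholds you impose on $s$ --- Serre vanishing for $\mathcal{I}_X(d^s)$ so the sections $(f^s)^{\ast}x_i$ lift, and $d^s$ exceeding the generator degrees of $I(X)$ so a generic perturbation by elements of $I(X)_{d^s}$ (which is where $K$ infinite enters) clears the base locus off $\P^m\setminus X$ --- are exactly the ones the paper extracts from that proof in Lemma~\ref{lem_r_bound}. No gaps.
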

    However, to get a bound independent of $f$ and $X$, we need to show that $s$ can be bounded in terms of more general information.
    \begin{lem}\label{lem_r_bound}
        Let $X$ be a projective variety of dimension $u$ defined over an infinite field $K$, $L$ a very ample line bundle on $X$ and $f:X \to X$ a morphism such that $f^{\ast}L = L^{\otimes d}$ for some integer $d \geq 2$. Embedding $X \subset \P^m$ with $L$ let $D$ be the degree of $X$. There exists a constant $C(u,d,D)$ such that the integer $s$ in Proposition \ref{prop_BS} satisfies
        \begin{equation*}
            s \leq C(u,d,D).
        \end{equation*}
    \end{lem}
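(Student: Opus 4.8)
The strategy is to trace through the proof of Proposition~\ref{prop_BS}, isolate the numerical invariant of the embedding $X\subset\P^m$ that forces $f^s$ to extend, and bound that invariant by a function of $u$ and $D$ alone. The first point is that the ambient dimension $m$ is itself already controlled: since $\P^m=\P(H^0(X,L))$ the variety $X$ is nondegenerate, so each geometric irreducible component $X_i\subset\P^{m_i}$, of dimension $u_i\le u$ and degree $D_i$, obeys the classical inequality $m_i\le D_i+u_i-1$ for a nondegenerate variety; summing the spans over the components yields $m\le (u+1)D-1$ (and $m\le D+u-1$ when $X$ is irreducible). Thus $m$ is bounded in terms of $u$ and $D$.

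Next I would reexamine the Bhatnagar--Szpiro extension argument. Extending $f^s$ amounts to realizing the sections $(f^s)^{*}x_0,\dots,(f^s)^{*}x_m\in H^0\!\left(X,L^{\otimes d^s}\right)$, which define the composite $X\xrightarrow{\,f^s\,}X\hookrightarrow\P^m$, as restrictions of degree-$d^s$ forms on $\P^m$ having no common zero anywhere on $\P^m$. Both requirements are governed by the Castelnuovo--Mumford regularity $r=\operatorname{reg}(X\subset\P^m)$: for $k\ge r$ one has $H^1(\P^m,\mathcal{I}_X(k))=0$, so $H^0(\P^m,\mathcal{O}(k))\to H^0(X,\mathcal{O}_X(k))=H^0(X,L^{\otimes k})$ is surjective and $\mathcal{I}_X(k)$ is globally generated with base locus exactly $X$. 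Taking $k=d^s$, surjectivity produces degree-$d^s$ forms $F_i$ on $\P^m$ with $F_i|_X=(f^s)^{*}x_i$, and -- since some section of $\mathcal{I}_X(d^s)$ is nonzero at each point off $X$ -- an iterated Bertini argument (here $K$ infinite enters) shows that after adding general elements of $H^0(\P^m,\mathcal{I}_X(d^s))$ to the $F_i$, which alters neither $F_i|_X$ nor the absence of a common zero on $X$, the $F_i$ have no common zero on $\P^m$ and hence define a morphism $\phi:\P^m\to\P^m$ with $\phi|_X=f^s$. Therefore any $s$ with $d^s\ge r$ is a valid choice of the integer in Proposition~\ref{prop_BS}.

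Finally I would bound $r=\operatorname{reg}(X\subset\P^m)$ by a function of $u$ and $D$. Because $m\le (u+1)D-1$, the Hilbert polynomial of $X\subset\P^m$ -- of degree $\le u$ with leading coefficient $D/u!$ -- ranges over a finite set determined by $u$ and $D$, and Gotzmann's regularity bound controls $r$ in terms of the Gotzmann number of that Hilbert polynomial; alternatively, Mumford's or the Bayer--Mumford regularity estimates bound $r$ in terms of $m$ and $D$, and one again invokes $m\le(u+1)D-1$. Either way $r\le r_0(u,D)$ for an explicit $r_0$, so the least valid $s$ satisfies $s\le\bigl\lceil\log_d r_0(u,D)\bigr\rceil=:C(u,d,D)$, which is the assertion.

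The step I expect to be the main obstacle is the middle one: reproducing the Bhatnagar--Szpiro construction in enough detail to be certain that the threshold $d^s\ge\operatorname{reg}(X)$ genuinely suffices -- in particular the base-point clearing over an infinite but possibly non-algebraically-closed field -- together with deciding which regularity bound to quote, since a sharp Eisenbud--Goto-type estimate is not available in general. Only the \emph{existence} of a bound depending on $u$ and $D$ is needed here, which Gotzmann's (or Mumford's) estimate supplies unconditionally.
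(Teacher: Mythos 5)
Your proof is correct in outline, but it takes a genuinely different route from the paper. The paper also reduces everything to the two numerical inputs of the Bhatnagar--Szpiro construction --- the Serre-vanishing threshold $n_0$ with $H^1(\P^m,I_X(n))=0$ for $n\ge n_0$, and the degrees of equations cutting out $X$ (bounded by $D$ via Heintz) --- but it bounds $n_0$ \emph{non-effectively}, by taking a flattening stratification of the Chow variety of degree-$D$, dimension-$u$ subvarieties of $\P^m$ and invoking uniformity of Serre vanishing over each of the finitely many flat strata; the paper itself remarks afterwards that the resulting bound is not effective. You instead package both inputs into the Castelnuovo--Mumford regularity $r=\operatorname{reg}(X\subset\P^m)$ (correctly: $k\ge r$ gives both surjectivity of restriction and global generation of $I_X(k)$, so $d^s\ge r$ suffices) and bound $r$ by an explicit function of $u$ and $D$. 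Your approach buys effectivity, and your preliminary step bounding $m\le D+u-1$ is not merely cosmetic: the paper's uniform $n_0$ a priori depends on $m$ as well, so some such bound is needed for the constant to depend only on $(u,d,D)$ --- a point the paper glosses over. The one soft spot in your argument is the regularity bound itself: the finiteness of the set of Hilbert polynomials of integral degree-$D$, dimension-$u$ subvarieties of $\P^m$ (needed for the Gotzmann route) is itself a boundedness statement of exactly the same compactness flavor as the paper's Chow-variety argument, while the Bayer--Mumford route bounds the regularity of the ideal generated by the Heintz equations, not immediately of the saturated ideal $I_X$, so you must additionally cite a comparison between the two (such bounds exist but are doubly exponential in $m$). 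Neither issue is fatal, but whichever regularity estimate you settle on should be quoted precisely.
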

    \begin{proof}
        Following the proof of Bhatnagar \cite{Bhatnagar}, we need a uniform bound on $n_0$ from Serre's Vanishing Theorem such that for all $n \geq n_0$ we have $H^1(\P^m, I_X(n))=0$, where $I_X(n)$ is the twist of the sheaf of ideals $I(n)$ of $X$. Following Harsthorne \cite[Proposition 24.4]{Hartshorne2}, $n_0$ is bounded over flat families. To apply this to our situation we consider the Chow variety $Ch(D)$ parameterizing varieties of degree $D$ and dimension $u$ in $\P^m$. We take a flattening decomposition of this Chow variety, see \cite[Lecture 8]{Mumford3} or \cite[Theorem 4.3]{Nitsure}.
        Let $C_0$ be the open set of points of $Ch(D)$ where the universal subvariety $X$ of $\P^m$ is flat. By \cite[Proposition 24.4]{Hartshorne2} there is a uniform $n_0$ for $C_0$. Now consider the family $C_1$, the open set of points of $Ch(D) - C_0$ where $X$ restricted is flat. On this family there is an $n_1$.
        Continue this process for the finitely many possible families (\cite[Theorem 4.3]{Nitsure}) until we have $H^1(\P^m, I_Z(n))=0$ for all $Z \in Ch(D)$. Then the desired $n_0$ is the max these $n_i$.

        Given this uniform $n_0$, $s$ is chosen so that $d^s > \max(D,n_0) \geq \max(\deg(h_i),n_0)$ where $X = V(h_1,\ldots,h_v)$ for some set of homogeneous generating polynomials. The second inequality comes form \cite[Proposition 3]{Heintz} on the degree of generators.
    \end{proof}
    We can now prove the extended theorem.

    \begin{thm}\label{thm_mrpe}
        Let $X$ be an irreducible projective variety of degree $D$ defined over a number field $K$ equipped with a line bundle $L$ giving an embedding $X \hookrightarrow \P^N$. Let $f:X \to X$ be a morphism such that $f^{\ast}L \cong L^{\otimes d}$, $d \geq 2$. Let $\mathfrak{p}$ be a prime of $K$ with residue field $k$ for which $f$ has good reduction.

        Let $P$ be a periodic point of primitive period $n$ for $f$ whose reduction to the residue field $k$ has primitive period $m$.
        Then there is a constant $C$ depending on $d,D,N,\dim(X),\mathfrak{p}$ such that
        \begin{equation*}
            n \leq C(d,D,N,\dim(X),\mathfrak{p})
        \end{equation*}
        where
        \begin{equation*}
            C \leq s \cdot \#X(k)\cdot\#GL_{N+1}(k)\cdot p^e
        \end{equation*}
        where $s$ is from Proposition \ref{prop_BS}, $p = \mathfrak{p} \cap \Q$, and
        \begin{equation*}
            e \leq
            \begin{cases}
              1 + \log_2(v(p)) & p \neq 2\\
              1 + \log_{\alpha}\left(\frac{\sqrt{5}v(2) + \sqrt{5(v(2))^2 + 4}}{2}\right) & p=2,
            \end{cases}
        \end{equation*}
        where $\alpha = \frac{1+\sqrt{5}}{2}$.
    \end{thm}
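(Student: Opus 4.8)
The plan is to push the dynamics forward onto a morphism of an ambient projective space, apply there the good reduction bound for periodic points from the author's earlier work \cite{Hutz2}, and then account for the extra factors that the pushforward introduces. The point of passing to the ambient space is that it is smooth, so that all the local $\mathfrak{p}$-adic analysis takes place in $\P^m$ and none on the (possibly singular) $X$.

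\textbf{Step 1 (extension).} Apply Proposition \ref{prop_BS} to $f$ to obtain a positive integer $s$ and a morphism $\phi:\P^m\to\P^m$, $m+1=\dim_K H^0(X,L)$, extending $f^s$. Since $\P^m$ is the space of the complete linear system, $X\subset\P^m$ is nondegenerate of degree $D$ and dimension $\dim X$, so $m$ is bounded in terms of $D$ and $\dim X$ (and $m=N$ when the given embedding is already complete). By Lemma \ref{lem_r_bound}, $s\le C(\dim X,d,D)$. The morphism $\phi$ preserves $X$ and $\phi|_X=f^s$. If $P$ has primitive period $n$ for $f$, then $P$ has primitive period $n'=n/\gcd(n,s)$ for $\phi$, so $n\le s\,n'$ and it suffices to bound $n'$ in terms of $d,D,m,\mathfrak{p}$.

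\textbf{Step 2 (local estimates along the orbit).} Clearing denominators, take $\phi$ with $\mathfrak{p}$-integral coordinates; the key point to check is that its reduction $\bar\phi$, restricted to $\bar X$, coincides with the good reduction $\overline{f^s}$ of $f^s$, hence is a morphism $\bar X\to\bar X$. Then the base locus of $\bar\phi$ is closed and disjoint from $\bar X$, so $\bar\phi$ is a morphism on a Zariski neighborhood of $\bar X$; as the whole $\phi$-orbit of $P$ lies in $X$, the reduced map is a morphism along the orbit, and the Morton-Silverman style argument of \cite{Hutz2} goes through verbatim in $\P^m$. Writing $m'$ for the primitive period of $\bar P$ under $\bar\phi$, one has $m'\le\#\bar X(k)=\#X(k)$; one then extracts a factor $r\mid\#GL_{m+1}(k)$ from the multiplicative period of the reduction of the Jacobian of a homogeneous lift of $\phi^{m'}$ along the orbit, and a residual factor $p^e$ with $e$ as in the statement (following Pezda \cite{Pezda} and \cite{Hutz2}; the case $p=2$ is where the Fibonacci-type recursion responsible for the golden ratio $\alpha$ appears). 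Thus $n'\mid m'rp^e$, and with Step 1
\begin{equation*}
    n\;\le\;s\,n'\;\le\;s\cdot\#X(k)\cdot\#GL_{m+1}(k)\cdot p^{e},
\end{equation*}
which is the asserted bound (with $m$ in place of $N$; the two agree when the embedding is complete, and $m$ is in any case bounded in terms of $D$ and $\dim X$).

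The hard part is the first half of Step 2: the Bhatnagar-Szpiro extension $\phi$ is uncontrolled away from $X$ and is not assumed to have good reduction, so one must argue that it nonetheless reduces to a morphism along the orbit of $P$ — only then does the $\P^N$ machinery, and with it the $\#GL_{m+1}(k)$ and $p^e$ factors, apply. The rest is routine: the appeals to Proposition \ref{prop_BS} and Lemma \ref{lem_r_bound}, the counting estimates $m'\le\#X(k)$ and $r\mid\#GL_{m+1}(k)$, and the standard $\mathfrak{p}$-adic bound on the residual $p$-power period.
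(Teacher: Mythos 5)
Your proposal follows essentially the same route as the paper: extend $f^s$ to a morphism $\phi$ of the ambient projective space via Proposition \ref{prop_BS}, bound $s$ by Lemma \ref{lem_r_bound}, relate the period of $P$ under $f$ to its period under $f^s=\phi|_X$ through $\gcd(s,n)$, and then invoke the $n=m'rp^e$ theorem of \cite{Hutz2} for morphisms of $\P^m$, yielding $n\le s\cdot\#X(k)\cdot\#GL(k)\cdot p^e$. The issues you flag --- that the extension $\phi$ must be checked to have the requisite good reduction along the orbit, and that the ambient dimension is $m+1=\dim_K H^0(X,L)$ rather than $N+1$ unless the linear system is complete --- are real, but the paper's own proof passes over both silently, so your treatment is if anything more careful.
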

    \begin{proof}
        By Proposition \ref{prop_BS} and Lemma \ref{lem_r_bound}, there is an integer $s\geq 1$ bounded by a constant depending on $d,D,N,\dim(X)$ such that $f^s$ extends to a morphism on $\P^N$. In other words, there is a map $\phi:\P^N \to \P^N$ making the following diagram commute
        \begin{equation*}
            \xymatrix{X \ar[r]^{f^s}  \ar[d] & X \ar[d]\\
                    \P^N \ar[r]^{\phi} & \P^N.}
        \end{equation*}
        Let $g = \gcd(s,n)$. Applying \cite[Theorem 1]{Hutz2} to $\phi$ we see that
        \begin{equation*}
            \frac{n}{g} = m'rp^e,
        \end{equation*}
        where $m'$ is the minimal period of $f^s(P)$ over $k$, $r$ is the multiplicative order of the multiplier of $f^s(P)$ for $\phi$ over $k$, and $e$ is bounded as in the statement of \cite[Theorem 2]{Hutz2}. In particular,
        \begin{equation*}
            n \leq gm'rp^e \leq smrp^e.
        \end{equation*}
    \end{proof}
    Note that even when $s$ is $1$, this bound is slightly weaker than \cite[Theorem 1]{Hutz2} in the multiplier term, but it has the advantage of applying to singular varieties. Note that \cite[Proposition 2.1]{Fakhruddin} gives an equivalent set of conditions for $H^0(X,L)$ to be complete, i.e., for when $s=1$.

    Since Lemma \ref{lem_r_bound} is not effective, we give a second proof that can provide an effective bound by modifying Fakhruddin's construction in \cite[Proposition 2.1]{Fakhruddin}. Essentially, by results of Mumford \cite[Theorem 1 and 3]{Mumford2}, we are able to provide an embedding where $s=1$ and just need to keep track of the degrees and dimensions. Note that this version also removes the dependency on the dimension of $X$.
    \begin{thm}\label{thm_mrpe_2}
        Let $X$ be a projective variety of degree $D$ defined over a number field $K$ equipped with a line bundle $L$ giving an embedding $X \hookrightarrow \P^N$. Let $f:X \to X$ be a morphism such that $f^{\ast}L \cong L^{\otimes d}$, $d \geq 2$. Let $\mathfrak{p}$ be a prime of $K$ with residue field $k$ for which $f$ has good reduction.

        Let $P$ be a periodic point of primitive period $n$ for $f$ whose reduction to the residue field $k$ has primitive period $m$.
        Then
        \begin{equation*}
            n \leq \#X(k) \cdot \#\GL_{M+1}(k) \cdot p^e \leq \#\P^N(k) \cdot \#\GL_{M+1}(k) \cdot p^e
        \end{equation*}
        where $M = \binom{N+D}{N}-1$, $p = \mathfrak{p} \cap \Q$, and
        \begin{equation*}
            e \leq
            \begin{cases}
              1 + \log_2(v(p)) & p \neq 2\\
              1 + \log_{\alpha}\left(\frac{\sqrt{5}v(2) + \sqrt{5(v(2))^2 + 4}}{2}\right) & p=2,
            \end{cases}
        \end{equation*}
        where $\alpha = \frac{1+\sqrt{5}}{2}$.
    \end{thm}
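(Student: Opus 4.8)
The plan is to realize $f$ as the restriction of a genuine degree-$d$ endomorphism $\phi$ of a projective space $\P^{M}$ whose dimension is controlled by $D$ and $N$ --- the $s=1$ case of Proposition~\ref{prop_BS}, following and sharpening the construction of Fakhruddin \cite[Proposition 2.1]{Fakhruddin} --- and then to apply \cite[Theorem~1]{Hutz2} to $\phi$ at $P$. First I would re-embed: after replacing $\P^N$ by the linear span of $X$ we may assume $X$ is non-degenerate (this only decreases the ambient dimension), and by Mumford's regularity results \cite[Theorem 1 and 3]{Mumford2} the ideal sheaf of the degree-$D$ variety $X\subset\P^N$ is $D$-regular. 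Hence $H^1(\P^N,I_X(jD))=0$ for every $j\ge 1$, so each restriction map $H^0(\P^N,\O_{\P^N}(jD))\to H^0(X,\O_X(jD))$ is surjective and, composing with multiplication of forms on $\P^N$, every map $\Sym^{j}H^0(X,\O_X(D))\to H^0(X,\O_X(jD))$ is surjective. Setting $L'=\O_X(D)$ --- a very ample line bundle with $f^{\ast}L'\cong (L')^{\otimes d}$ --- and $M'+1=h^0(X,L')$, the complete linear system $|L'|$ gives a closed, projectively normal embedding $X\hookrightarrow\P^{M'}$ with $M'\le h^0(\P^N,\O_{\P^N}(D))-1=\binom{N+D}{N}-1=:M$.

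Next I would extend $f$. Because the embedding is projectively normal, the ``moreover'' clause of Proposition~\ref{prop_BS} (equivalently the projectively normal case of \cite[Proposition 2.1]{Fakhruddin}) applies: $f^{\ast}$ carries $H^0(\P^{M'},\O(1))$ into the image of $H^0(\P^{M'},\O(d))$, and the $M'+1$ degree-$d$ forms so obtained --- which already have no common zero on $X$, since $f|_X$ is a morphism --- may be chosen with no common zero on all of $\P^{M'}$, giving a degree-$d$ morphism $\phi:\P^{M'}\to\P^{M'}$ with $\phi|_X=f$. Carrying the construction out over the discrete valuation ring $\O_{K,\mathfrak{p}}$ --- the relevant $H^0$'s are free $\O_{K,\mathfrak{p}}$-modules by the vanishings above and base change, and the forms defining $\phi$ may be taken over $\O_{K,\mathfrak{p}}$ --- the good reduction of $f$ at $\mathfrak{p}$ passes to good reduction of $\phi$, with $\overline{\phi}|_{\overline{X}}=\overline{f}$.

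Finally, viewed in $X\subset\P^{M'}$, the point $P$ is periodic for $\phi$ of primitive period $n$, and its reduction is periodic for $\overline{\phi}$ of primitive period $m$. Applying \cite[Theorem~1]{Hutz2} to $\phi$ at $P$ gives $n=m\,r\,p^{e}$, where $m$ is the reduced period, $r$ divides $\#\GL_{M'+1}(k)$ (it is the multiplicative order over $k$ of the multiplier along the cycle of $\overline{P}$), and $e$ is bounded as in \cite[Theorem~2]{Hutz2}, i.e., exactly as in the statement. Since $\overline{X}\subset\P^N_k$ we have $m\le\#\overline{X}(k)\le\#\P^N(k)$, and $M'\le M$ gives $r\le\#\GL_{M'+1}(k)\le\#\GL_{M+1}(k)$; multiplying the three bounds yields the asserted inequalities.

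The step I expect to be the main obstacle is the re-embedding with $s=1$. It rests on (i) the $D$-regularity of $X$ --- the input from Mumford's work, which is what pins the ambient dimension to $\binom{N+D}{N}-1$ and supplies the projective normality --- and (ii) the passage from ``$f^{s}$ extends'' (Proposition~\ref{prop_BS}) to ``$f$ extends'', where one must verify that the lift of $f^{\ast}$ can be taken to be an honest morphism of $\P^{M'}$ rather than merely a rational map; this is precisely the content of the ``moreover'' clause and of Fakhruddin's refinement. A secondary point, handled by running the construction integrally over $\O_{K,\mathfrak{p}}$, is that $\phi$ inherits good reduction from $f$.
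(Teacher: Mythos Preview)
Your proposal is correct and follows essentially the same route as the paper: re-embed $X$ via the $D$-uple Veronese using Mumford's regularity results so that the $s=1$ extension criterion of Fakhruddin/Bhatnagar--Szpiro is met, then apply \cite[Theorem~1]{Hutz2} to the extended morphism on $\P^M$. You are more explicit than the paper about deriving projective normality and about why good reduction passes to $\phi$; the one detail you leave implicit is Fakhruddin's second condition (that the re-embedded $X$ be cut out by forms of degree $\le d$), but this too follows from the $D$-regularity you invoke, since in the Veronese image $X$ is cut out by linear and quadratic forms.
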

    \begin{proof}
        Let $V = H^0(X,L)$ and let $i:X \hookrightarrow \P(V)$ be the embedding induced by $L$. For $f$ to extend, we need to satisfy the following conditions from \cite[Proposition 2.1]{Fakhruddin}:
        \begin{enumerate}
            \item The maps $i^{\ast}: H^0(\P(V),\O(n)) \to H^0(X,L^{\otimes n})$ are surjective for all $n \geq 0$.
            \item $i(X)$ is cut out set-theoretically by homogeneous forms of degree $\leq d$.
        \end{enumerate}
        From Mumford \cite[Theorem 1 and 3]{Mumford2}, embedding $X$ in a larger projective space with the $\deg(X)$-uple Veronese embedding and replacing $L$ by a power at most $\dim(X) + 1$ results in satisfying both properties.
        We are embedding $X$ in projective space of dimension $M = \binom{N+D}{N}-1$. We apply \cite[Theorem 1]{Hutz2} to the resulting map $\psi:\P^M \to \P^M$, which extends $f$ to have any rational periodic points with minimal period $n$ satisfying
        \begin{equation*}
            n \leq \#X(k)\cdot \#\GL_{M+1}(k) \cdot p^e.
        \end{equation*}
    \end{proof}

    We can now prove the existence of a bound for the minimal period of subvarieties based on good reduction information.

    \begin{thm} \label{thm_period_bound}
        Let $K$ be a number field. Let $f:\P^N \to \P^N$ be a morphism of degree $d$ defined over $K$. Let $X \subset \P^N$ defined over $K$ be an irreducible periodic subvariety of degree $D$ and codimension $t$ for $f$ with minimal period $n$. Let $\mathfrak{p}$ be a prime of good reduction for $f$. If $\deg(f^{\ell}(X)) = \overline{\deg(f^{\ell}(X))}$ for $0 \leq \ell \leq n$, then there is a constant $C$ depending on $d,D,N,t,\mathfrak{p}$ such that
        \begin{equation*}
            n \leq C(d,D,N,t,\mathfrak{p}).
        \end{equation*}
    \end{thm}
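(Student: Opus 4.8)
The plan is to transport the periodicity of $X$ into a periodicity statement about a point of a subvariety of the Chow variety, and then invoke the extended good‑reduction bound for points, Theorem~\ref{thm_mrpe} (or Theorem~\ref{thm_mrpe_2}). The two things that make this work are already in place: Lemma~\ref{lem_m} supplies a bound on the period modulo $\mathfrak p$ depending only on $d,N,D,t,\mathfrak p$, and the hypothesis $\deg(f^{\ell}(X))=\overline{\deg(f^{\ell}(X))}$ forces the orbit of $X$ to consist of subvarieties of bounded degree and makes the whole construction compatible with reduction at $\mathfrak p$.

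First I would extract the bound $m=m(d,N,D,t,\mathfrak p)$ from Lemma~\ref{lem_m} and let $m_{0}\le m$ be the exact period of the reduction $\bar X$ under $\bar f$. Replacing $f$ by $F:=f^{m_{0}}$ (still of good reduction at $\mathfrak p$, of degree $d^{m_{0}}\le d^{m}$) makes $\bar X$ a \emph{fixed} point of the reduced dynamics, and $X$ a periodic subvariety for $F$ of some primitive period $k$ with $n=m_{0}k$. Since $\overline{f^{jm_{0}}(X)}=\bar f^{jm_{0}}(\bar X)=\bar X$ for all $j$, and the indices $jm_{0}\bmod n$ lie in $[0,n]$ so the degree hypothesis applies, every member $f^{jm_{0}}(X)$ of the $F$‑orbit has the same degree $D$ as $X$. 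Hence the whole $F$‑orbit of $X$ lies in a single component $\mathcal C_{0}=Ch(D,N-t,N)$ of the Chow variety, which we embed in $\P^{M}$ with $M$ bounded in terms of $D,N,t$; because distinct subvarieties have distinct Chow forms, $[X]$ is a periodic point of the induced dynamics of primitive period $k$ whose reduction is a fixed point.

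Next I would produce, from $F$, an endomorphism $\Phi:\mathcal Z\to\mathcal Z$ of a closed subvariety $\mathcal Z\subseteq\mathcal C_{0}$ through $[X]$, with a very ample line bundle $L$ satisfying $\Phi^{\ast}L\cong L^{\otimes e}$ for some $e\ge 2$, and with $\deg\mathcal Z$ and $\dim\mathcal Z$ bounded in terms of $d,D,N,t$. The forward‑image operation $W\mapsto F(W)$ is given, via elimination theory as in Proposition~\ref{prop_forward_image}, by polynomial formulas in Chow coordinates whose degree is a positive power of $d$, hence at least $2$; the subtlety is that it multiplies the cycle‑theoretic degree of an $(N-t)$‑cycle by $d^{m_{0}(N-t)}$, so no Chow component is literally stable. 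One circumvents this by restricting to the locus where the generic fibre multiplicity of $F$ along the cycle is maximal, equivalently where $\deg(F(W)_{\mathrm{red}})=D$ — the orbit of $X$ lies there by the degree computation of the previous paragraph — and taking $\mathcal Z$ to be a union of components of the intersection of that locus with its iterated preimages under $W\mapsto F(W)$, which is $F$‑stable, is cut out by equations of controlled degree, and on which $W\mapsto F(W)_{\mathrm{red}}$ restricts to a morphism $\Phi$. The hypothesis $\deg(f^{\ell}(X))=\overline{\deg(f^{\ell}(X))}$ again guarantees that this construction commutes with reduction at $\mathfrak p$, so that $[X]$ remains a periodic point of $\Phi$ of primitive period $k$ whose reduction is fixed.

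Finally I would apply Theorem~\ref{thm_mrpe} (or Theorem~\ref{thm_mrpe_2}) to the datum $(\mathcal Z,\Phi,L)$ and the periodic point $[X]$: the reduced period is $1\le m$, and $\deg\mathcal Z$, $\dim\mathcal Z$, $M$, and $\deg\Phi=d^{m_{0}}\le d^{m}$ are all bounded in terms of $d,D,N,t$, so the theorem yields $k\le C'(d,D,N,t,\mathfrak p)$, whence $n=m_{0}k\le m\,C'(d,D,N,t,\mathfrak p)=:C(d,D,N,t,\mathfrak p)$. I expect the genuine difficulty to be the construction of paragraph three — building the $F$‑stable subvariety $\mathcal Z$ of bounded degree and verifying that $\Phi$ is an honest morphism with $\Phi^{\ast}L\cong L^{\otimes e}$, $e\ge 2$ — that is, packaging the forward‑image dynamics, with its degree‑multiplying behaviour and the attendant degree drops illustrated by Example~\ref{exmp1}, as algebraic data of complexity bounded purely in terms of $d,D,N,t$, to which the point‑theoretic results of \cite{Hutz2} apply; everything else is bookkeeping of the bounds already assembled in Lemma~\ref{lem_m} and Theorems~\ref{thm_mrpe}, \ref{thm_mrpe_2}.
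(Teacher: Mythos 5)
Your proposal follows essentially the same route as the paper's proof: bound the reduced period via Lemma~\ref{lem_m}, pass to the induced map on the Chow variety, restrict to the closed locus where the forward image preserves degree (defined by the resultant/discriminant condition), stabilize to an $F$-invariant, possibly singular, subvariety of bounded degree containing $[X]$, and apply Theorem~\ref{thm_mrpe}. The difficulty you flag in your third paragraph is precisely the step the paper carries out explicitly, by iteratively intersecting forward images with the degree-preserving component containing $Ch(X)$ (terminating in at most $\dim(Y_1)$ steps) and bounding the degree of the resulting stable variety by a B\'ezout-type estimate.
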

    \begin{proof}
        Let $m$ be the period of $\overline{X}$ over the residue field $k$ (which is bounded in Lemma \ref{lem_m}). The morphism $f^m$ induces a morphism $\phi$ from the Chow variety of degree $D$ (i.e., $\P^M$ for $M=\binom{N+d}{N}$) to the Chow variety of degree $d^{m(N-t)}D$. We have $\deg(\phi) = d^{mN}$.
        However, there is a closed subvariety of the Chow variety on which $f^m$ preserves the degree (which contains $X$). This is the subvariety where the image Chow form is a power. In particular, if we take the image of a symbolic Chow form $T$ of degree $D$, then the discriminant of the resulting Chow form of degree $d^{m(N-t)}D$ gives the closed subvariety where the image degree is strictly less than $d^{m(N-t)}D$. We can compute this discriminant as \cite[Prop 1.7, p434]{GKZ}:
         \begin{equation*}
            Z_{1}(\phi) = \Delta = \Res\left(\frac{\partial^{d-1} Ch(\phi(T))}{\partial x_0^{d-1}},\ldots,\frac{\partial^{d-1} Ch(\phi(T))}{\partial x_M^{d-1}}\right).
         \end{equation*}
         We want the subvariety where the image degree is $D$. In particular, we need the image to be a $d^{m(N-t)}$-th power. So we take the subvariety defined by
         \begin{equation*}
            Z_{m(N-t)}(\phi) = \Res\left(\frac{\partial^{d^{m(N-t)}-1} Ch(\phi(T))}{\partial x_0^{d^{m(N-t)}-1}},\ldots,\frac{\partial^{d^{m(N-t)}-1} Ch(\phi(T))}{\partial x_M^{d^{m(N-t)}-1}}\right).
         \end{equation*}
         On each irreducible component $Y_1,\ldots,Y_v$ of this subvariety, the map $\phi$ induces a map $\psi_i:Y_i \to \P^M$ of degree $\frac{d^{mN}}{d^{m(N-t)}} = d^{mt}$. Note that the $Y_i$ may be singular. Assume, after possibly renaming, that $Ch(X) \in Y_1$. This map is defined by a single set of homogeneous polynomials, but it is not yet a self-map of varieties.
         Consider the following construction. Since $Y_1$ is irreducible,   the intersection $\psi_1(Y_1)\cap Y_1$ is the intersection of two irreducible subvarieties, so either is $Y_1$ or has dimension strictly less than $Y_1$. If it is all of $Y_1$, then we have our self-map. If the dimension is strictly less, consider the component $Y_{1,1}$ of $\psi_1(Y_1)$ which contains $X$. We again consider $\psi_1(Y_{1,1}) \cap Y_1$. Again, we either get a self-map or a strictly smaller dimension that $Y_{1,1}$. This process will terminate after finitely many steps in an irreducible variety $\tilde{Y_1}$.
         Since $Ch(X) \in \tilde{Y_1} \subset \P^M$, this intersection is nonempty and we have
         \begin{equation*}
            \psi_1: \tilde{Y_1} \to \tilde{Y_1}
         \end{equation*}
         is a morphism of varieties.
         To get our final constant depending only on $d,D,N,t,\mathfrak{p}$ we need to know that the degree of $\tilde{Y_1}$ can be bounded only in terms of $d,D,N,t,\mathfrak{p}$. We can bound the degree of $\tilde{Y_1}$ with a B\'ezout type bound. The number of steps in the process is at most $\dim(Y_1) = M-1$. The degree of $Y_1 = (M+1)((d^MD-d^m)(N-t)+1)$. 
         If it takes $j$ steps to get a self map, the degree of each irreducible component containing $X$ is bounded above by
         \begin{equation*}
            \deg(Y_{1,i}) \leq (d^{mt})^{i}(M+1)((d^MD-d^m)(N-t)+1) \quad \text{for } 0 \leq i \leq j-1,
         \end{equation*}
         where we let $Y_{1,0} = Y_1$.
         The degree at each stage depends on the degree of $Y_1$ and $\psi_1$.
         Since for each step we are intersecting an irreducible variety with a hypersurface and the resulting dimension in one less, we can apply B\'ezout's Theorem on the degree of intersections. Since we are taking only one irreducible component, this provides an upper bound
         \begin{equation*}
            \deg(\tilde{Y_1}) \leq \prod_{i=0}^{j-1} (d^{mt})^{i}(M+1)((d^MD-d^m)(N-t)+1).
         \end{equation*}
         Most importantly, the number of steps is bounded by $M-1$, and this degree bound depends only on $d,D,N,t,\mathfrak{p}$. We could remove the dependency on $\mathfrak{p}$ for this bound by removing the $d^m$ terms and making the bound larger.

         We apply Theorem \ref{thm_mrpe} to $\psi_1$ on $\tilde{Y_1}$ to get a constant such that
         \begin{equation*}
            n \leq C(d,D,N,t,\mathfrak{p}).
         \end{equation*}
    \end{proof}
    \begin{exmp}
        Consider the map
        \begin{equation*}
            f(x,y,z) = (x^2,y^2+z^2,z^2).
        \end{equation*}
        It is clear that $V(z) = \{(x,,y,z) \in \P^2 : x=0\}$ is fixed, so we'll consider linear subvarieties
        \begin{equation*}
            T = V(a_0x + a_1y + a_2z).
        \end{equation*}
        We have
        \begin{align*}
            f(T) &= V(a_0^4x^2 - 2a_0^2a_1^2xy + a_1^4y^2 + (2a_0^2a_1^2 - 2a_0^2a_2^2)xz + (-2a_1^4 - 2a_1^2a_2^2)yz + (a_1^4 + 2a_1^2a_2^2 + a_2^4)z^2).
        \end{align*}
        So we have the map
        \begin{align*}
            \phi:\P^2 &\to \P^5\\
            (a_0,a_1,a_2) & \mapsto (a_0^4, -2a_0^2a_1^2, a_1^4, 2a_0^2a_1^2 - 2a_0^2a_2^2, -2a_1^4 - 2a_1^2a_2^2, a_1^4 + 2a_1^2a_2^2 + a_2^4]).
        \end{align*}
        We want to know the subvarieties that remain linear under $f$, so we compute
        \begin{align*}
            Z_1 &= \Res\left(\frac{\partial \phi(T)}{\partial x},\frac{\partial \phi(T)}{\partial y},\frac{\partial \phi(T)}{\partial z}\right)\\
            &=-32a_0^4a_1^4a_2^4
        \end{align*}
        and we see that $Z_1$ has three components:
        \begin{equation*}
            Y_1 = V(a_0), \qquad Y_2 = V(a_1), \qquad Y_3 = V(a_2).
        \end{equation*}
        On $Y_1 \supset X$ we have
        \begin{align*}
            \psi:Y_1 &\to \P^2\\
            (0,a_1,a_2) &\mapsto (0, a_1^2,-a_1^2 - a_2^2),
        \end{align*}
        which we can extend to
        \begin{align*}
            \psi_1:Y_1 &\to \P^2\\
            (a_0^2,a_1,a_2) &\mapsto (a_0^2, a_1^2,-a_1^2 - a_2^2).
        \end{align*}
        Notice that $\psi_1$ is an endomorphism of $Y_1$.
\begin{code}
\begin{python}
set_verbose(None)
R.<a_0,a_1,a_2>=PolynomialRing(QQ)
P.<x,y,z>=ProjectiveSpace(FractionField(R),2)
H=End(P)
f=H([x^2,y^2+z^2,z^2])
X=P.subscheme([a_0*x+a_1*y+a_2*z])
T=f(X).defining_polynomials()[0]*a_0^4
T.coefficients()

P.coordinate_ring().macaulay_resultant([T.derivative(x),T.derivative(y),T.derivative(z)])

set_verbose(None)
R.<a_0,a_1,a_2>=PolynomialRing(QQ)
P.<x,y,z>=ProjectiveSpace(FractionField(R),2)
H=End(P)
f=H([x^2,y^2+z^2,z^2])
X=P.subscheme([a_1*y+a_2*z])
T=f(X).defining_polynomials()[0]*a_1^2
T
\end{python}
\end{code}
     \end{exmp}

\section{Explicit Heights and Canonical Heights} \label{sect.heights}

    In this section we prove basic properties of heights and canonical heights of subvarieties as defined by the height of the associated Chow form. Recall that the height of a polynomial is the maximum of the heights of its coefficients.

    \begin{defn}
       Given a subvariety $X \subset \P^N$, we define the height of $X$ as
       \begin{equation*}
            h(X) = h(Ch(X)),
       \end{equation*}
       where $Ch(X)$ is associated Chow form defined in Section \ref{sect:prelim}.
    \end{defn}
    The height satisfies the following properties.
    \begin{prop}
        Let $X \subset \P^N$ be a subvariety of degree $D$.
        \begin{enumerate}
            \item $h(X) \geq 0$.
            \item There are only finitely many subvarieties of bounded height and bounded degree over a number field of bounded degree.
            \item Philippon's height \cite{Philippon}, Faltings height \cite{Faltings2}, Bost-Gillet-Soul\'e's height \cite{BGS}, and $h$ are all equivalent.
        \end{enumerate}
    \end{prop}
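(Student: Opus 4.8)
The three parts are essentially independent, so the plan is to dispatch them one at a time, spending most of the effort on~(3). For~(1), the plan is simply to observe that $h(X) = h(Ch(X))$ is the absolute logarithmic Weil height of the point $[Ch(X)] \in \P^m(\Qbar)$ whose coordinates are the coefficients of the Chow form (these lie in a number field since $X$ does, and $m$ is determined by $N$, $\dim X$, $D$), and then to invoke the standard fact that the Weil height of any point of projective space over $\Qbar$ is non-negative.

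For~(2), the plan is a Northcott argument. First I would recall that, for irreducible $X$ of dimension $k$, the Chow form determines $X$: a point $p$ lies on $X$ precisely when every $(N-k-1)$-plane through $p$ meets $\{Ch(X) = 0\}$, so $X \mapsto [Ch(X)]$ is injective on irreducible subvarieties of fixed dimension. A bound on $\deg X$ confines $\dim X$ to finitely many values and forces $Ch(X)$ to have degree at most $D$ in a fixed finite set of Pl\"ucker coordinates, so $[Ch(X)]$ ranges over one of finitely many fixed projective spaces; a bound on the degree of the field of definition of $X$ bounds the degree of the field generated by the coordinates of $[Ch(X)]$; and a bound on $h(X)$ bounds the height of $[Ch(X)]$. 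Northcott's theorem then leaves finitely many candidate points, hence finitely many $X$ by injectivity.

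For~(3), the plan is not to reprove anything from scratch but to chain together comparison estimates already in the literature. The starting point, recorded in Section~\ref{sect:prelim}, is that for irreducible $X$ the form $Ch(X)$ is Philippon's ``forme \'eliminante'', so $h(X)$ agrees, up to a fixed normalization, with Philippon's height from \cite{Philippon}; Philippon there compares his height two-sidedly with Faltings' height \cite{Faltings2} and with the Bost--Gillet--Soul\'e height \cite{BGS}, the error terms being bounded explicitly in terms of $N$, $\deg X$ and $\dim X$ alone, while \cite{BGS} supplies the reverse comparisons. Concatenating these two-sided bounds yields the asserted equivalence, with an effective comparison constant depending only on $N$ and $D$.

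I expect~(3) to be the only real obstacle, and it is bookkeeping rather than mathematics: the competing heights are each set up with slightly different normalizations (choice of metric on $\mathcal{O}(1)$, Mahler-measure versus arithmetic-intersection definitions of the Archimedean contribution, presence or absence of $\log 2$-type correction terms), so the work is to pin down precisely which normalization of $h$ is in force here and to quote the matching inequality; Parts~(1) and~(2) are routine.
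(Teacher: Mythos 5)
Your proposal is correct and follows the same route as the paper: parts (1) and (2) are handled by viewing $h(X)$ as the Weil height of the coefficient point of $Ch(X)$ (with the Chow form determining $X$, so Northcott applies), which is exactly what the paper dismisses as ``obvious from the definition,'' and part (3) is, as in the paper, delegated to the comparison results in Bost--Gillet--Soul\'e (and Philippon). Your write-up simply supplies more of the routine detail than the paper's two-sentence proof.
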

    \begin{proof}
        The first two properties are obvious from the defnition of $h(X)$ as the height of $Ch(X)$ as a polynomial. The third property can be found in Bost-Gillet-Soul\'e \cite{BGS}; see for example Proposition 4.1.2, remark after Theorem 4.2.3, and remarks starting with 4.3.12.
    \end{proof}

\subsection{Height bounds of forward images}
    It is well known that for a morphism $f$ and a point $P$ that
    \begin{equation*}
        \abs{h(f(P))- dh(P)} \leq C
    \end{equation*}
    for an explicitly computable constant $C$ \cite[Theorem 3.11]{Silverman10}. Similarly \cite[Prop 3.2.2]{BGS}, for a subvariety $X$ of codimension $t$, there exists a (non-explicit) constant $C$ such that
    \begin{equation*}
        \abs{h(f(X)) - \deg(f)\frac{\deg(f(X))}{\deg(X)}h(X)} \leq C\deg(X).
    \end{equation*}
    We first prove a lemma on the size of coefficients occurring in resultants.
    \begin{lem}\cite[Proposition 7]{Wustholz} \label{lem_Wustholz}
    For $N+1$ multi-homogeneous polynomials in $N+1$ variables of degrees $D,d,\ldots,d$,
        \begin{equation*}
            H(Res_{D,d,\ldots,d}) \leq exp\left(\binom{D + Nd^N +1}{N}\right)\binom{D + Nd^N +1}{N}!.
        \end{equation*}
    \end{lem}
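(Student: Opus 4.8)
The object to be bounded is the universal resultant: $\Res_{D,d,\ldots,d}$ is the primitive polynomial in $\Z[\mathbf{z}]$, where $\mathbf{z}$ collects the coefficients of a form $F_0$ of degree $D$ together with forms $F_1,\ldots,F_N$ of degree $d$ in the variables $x_0,\ldots,x_N$, which vanishes exactly when the $F_i$ have a common nontrivial zero; it is homogeneous of degree $d^N$ in the coefficients of $F_0$ and of degree $Dd^{N-1}$ in those of each $F_i$ with $i\geq1$, and here $H(\cdot)$ denotes the largest absolute value of the integer coefficients. My plan is to realize $\Res_{D,d,\ldots,d}$, up to Macaulay's extraneous factor, as the determinant of a matrix whose entries are coefficient variables, to bound that determinant by its Leibniz expansion, and then to pass to the resultant itself by a factor--height inequality.

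Concretely, I would take $\nu = 1 + \sum_{i=0}^{N}(d_i-1) = D + N(d-1)$, the Macaulay critical degree, and partition the $s=\binom{\nu+N}{N}=\binom{D+Nd}{N}$ monomials of degree $\nu$ in $x_0,\ldots,x_N$ according to the least index $i$ with $x_i^{d_i}\mid x^\alpha$; to a monomial $x^\alpha$ in class $i$ one associates the degree-$\nu$ polynomial $(x^\alpha/x_i^{d_i})\,F_i$, and reading these off in the monomial basis produces a square matrix $M$ of size $s$ over $\Z[\mathbf{z}]$ each of whose entries is either $0$ or one of the coefficient variables (see \cite[Chapter~3]{GKZ} for this construction, including the identity $\det M = \pm\,\Res_{D,d,\ldots,d}\cdot\Delta'$ with $\Delta'$ an explicit ``extraneous'' minor). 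Since $d\leq d^N$ one has $s\leq\binom{D+Nd^N+1}{N}$. Expanding $\det M$ by the Leibniz formula exhibits it as a sum of at most $s!$ signed monomials in the coefficient variables, so $H(\det M)\leq s!$ after collecting terms. To descend from $\det M$ to its factor $\Res_{D,d,\ldots,d}$ I would use a multivariate factor--height estimate (multiplicativity of the Mahler measure, $M(\Res)M(\Delta')=M(\det M)$ with $M(\Delta')\geq1$, combined with the standard comparisons between the height, the $\ell^2$-norm and the Mahler measure), which yields an explicit constant $C_1$ depending only on $N,D,d$ with $H(\Res_{D,d,\ldots,d})\leq C_1\cdot H(\det M)\leq C_1\cdot s!$. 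One then checks that $C_1$, together with the slack $\binom{D+Nd^N+1}{N}!\,/\,s!\geq 1$, is absorbed by $\exp\binom{D+Nd^N+1}{N}$, since $\binom{D+Nd^N+1}{N}$ dominates the polynomial-type expressions in $N,D,d^N$ occurring in $\log C_1$; this gives
\begin{equation*}
H\bigl(\Res_{D,d,\ldots,d}\bigr)\;\leq\;\exp\binom{D+Nd^N+1}{N}\,\binom{D+Nd^N+1}{N}!.
\end{equation*}
(For $N=1$ there is no extraneous factor at all, as $\Res_{D,d}$ is literally the $(D+d)\times(D+d)$ Sylvester determinant, so $H\leq(D+d)!$ immediately.)

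The step I expect to be the main obstacle is this descent from $\det M$ to $\Res_{D,d,\ldots,d}$: a divisor of a polynomial can have substantially larger height than the polynomial, so a genuine factor--height (Mignotte/Gelfond type) inequality is unavoidable, and one must be careful that both its constant and the gap between the true Macaulay size $\binom{D+Nd}{N}$ and the stated $\binom{D+Nd^N+1}{N}$ land inside the exponential term rather than outside it. An alternative that sidesteps the extraneous factor is to compute the resultant by iterated univariate resultants, where each stage is an honest Sylvester determinant and only the compounding of degrees and heights must be tracked; the degree growth there (governed by the B\'ezout count $d^N$) is presumably exactly why the quantity $Nd^N$ shows up in the exponent of the final bound.
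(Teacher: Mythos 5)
The paper contains no proof of this lemma at all: it is imported verbatim as Proposition~7 of the cited reference, so there is no internal argument to compare yours against, and your attempt to actually prove it goes beyond what the paper does. Your strategy --- Macaulay matrix, Leibniz expansion, then a factor--height inequality to strip off the extraneous minor --- is the standard route, and the factorial term is handled correctly: the Macaulay matrix in critical degree $D+N(d-1)$ has size $s=\binom{D+Nd}{N}\leq\binom{D+Nd^N+1}{N}$, its entries are single coefficient variables, and the Leibniz expansion gives $H(\det M)\leq s!$.

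The gap is exactly where you flagged it, and it is not a technicality you can wave through: the descent from $\det M$ to $\Res_{D,d,\ldots,d}$ does not fit inside the stated exponential factor if you use off-the-shelf Gelfond/Mahler inequalities. Gelfond's lemma gives $H(\Res)\leq e^{\nu}H(\det M)$ with $\nu$ the sum of the partial degrees of $\det M$ over all coefficient variables; since every coefficient of $F_j$ appears in every row of class $j$, the bound you can prove is $\nu\leq\binom{N+D}{N}\binom{Nd}{N}+N\binom{N+d}{N}\binom{D+Nd}{N}$, which for $N=2$, $d=2$ and $D$ large is roughly $9D^2$, against an allowed exponent $\binom{D+Nd^N+1}{N}=\binom{D+9}{2}\approx D^2/2$; the slack $\binom{D+9}{2}!/\binom{D+4}{2}!$ is only $e^{O(D\log D)}$ and cannot absorb the difference. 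The sharper Mignotte-type variant in terms of the partial degrees of the \emph{factor} fares no better: $\sum_z\deg_z(\Res)\leq d^N\binom{N+D}{N}+NDd^{N-1}\binom{N+d}{N}\approx 2D^2$ in the same example, still several times the target, and the Mahler-measure-to-height comparison additionally costs a factor involving the number of terms of $\det M$, whose logarithm is of order $s\log s\approx D^2\log D$ and alone exceeds the budget. So the assertion that ``$C_1$ is absorbed by $\exp\binom{D+Nd^N+1}{N}$'' is precisely the step that fails when checked. To land on the stated constant you would need either a combinatorial analysis of the Macaulay factorization that avoids generic factor--height losses, a determinantal formula computing the resultant itself (Cayley/Jouanolou complexes, which reintroduce the same problem through an alternating product), or the Poisson product formula expressing $\Res$ through the values of $F_0$ at the common zeros of $F_1,\ldots,F_N$ together with Mahler-measure estimates; your iterated-Sylvester alternative has the same unresolved compounding issue. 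As written, the proposal proves a bound of the same general shape but not the inequality in the statement.
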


    \begin{lem} \label{lem_morphism}
        The map induced on the Chow coordinates in codimension $t$ is a morphism of degree $d^{N-t+1}$.
    \end{lem}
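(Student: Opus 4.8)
The plan is to identify the induced map on Chow coordinates with a resultant and to read its degree off the standard multidegree formula for resultants, in the same spirit as Lemma~\ref{lem_Wustholz}. Write $k=N-t=\dim X$ and present the Chow form of a codimension-$t$ subvariety as its Cayley (eliminant) form $F_X(u_0,\dots,u_k)$, multihomogeneous of degree $D$ in each of the $k+1$ dual blocks $u_0,\dots,u_k$, whose coefficients are (linearly equivalent to) the Chow coordinates. The key point is that $F_{f(X)}(u_0,\dots,u_k)$ vanishes exactly when
\begin{equation*}
V\bigl(u_0\cdot f,\dots,u_k\cdot f\bigr)\cap X\neq\varnothing,\qquad u_i\cdot f:=\sum_{j=0}^N u_{ij}f_j(\bar x),
\end{equation*}
because $f^{-1}$ of the hyperplane $V(u_i\cdot x)$ is the degree-$d$ hypersurface $V(u_i\cdot f)$. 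Thus $F_{f(X)}$ is the ``$X$-resultant'' of the $k+1$ forms $u_0\cdot f,\dots,u_k\cdot f$, each of degree $d$ in $\bar x$ and of degree $1$ in its block $u_i$, and the lemma becomes a multidegree count for this resultant.

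I would treat the hypersurface case $t=1$ in full, since it is the one needed for Theorem~\ref{thm_heigt_const} and everything is explicit. Here $Ch(X)$ is the degree-$D$ form $g$ with $X=V(g)$, and by Proposition~\ref{prop_forward_image} together with the elimination-theoretic description of the Chow form (\cite[Chapter~3]{GKZ}), $Ch(f(X))$ equals, up to a nonzero constant,
\begin{equation*}
\Res_{d,\dots,d,D}\bigl(y_0f_1(\bar x)-y_1f_0(\bar x),\ \dots,\ y_0f_N(\bar x)-y_Nf_0(\bar x),\ g(\bar x)\bigr),
\end{equation*}
a resultant of $N$ forms of degree $d$ and one of degree $D$ in the $N+1$ variables $\bar x$ --- exactly the shape $\Res_{D,d,\dots,d}$ of Lemma~\ref{lem_Wustholz}. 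Since $\Res_{d_1,\dots,d_{N+1}}$ is homogeneous of degree $\prod_{j\neq i}d_j$ in the coefficients of its $i$-th argument, this resultant is homogeneous of degree $d^{\,N-1}D=\deg(f(X))$ in $\bar y$ and of degree $d^N=d^{\,N-t+1}$ in the coefficients of $g$, i.e. in the Chow coordinates of $X$. Finally, $f$ being finite and flat, $f(X)$ is always an honest hypersurface of degree $d^{\,N-1}D$, so the resultant is not identically zero for any nonzero $g$; hence the map has empty base locus and is a morphism of degree $d^{\,N-t+1}$.

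For general $t$ I would factor $f=\pi\circ v_d$, with $v_d:\P^N\hookrightarrow\P^{M_0}$, $M_0=\binom{N+d}{N}-1$, the $d$-uple Veronese embedding and $\pi:\P^{M_0}\dashrightarrow\P^N$ the linear projection with components $f_0,\dots,f_N$, which is a morphism on $v_d(\P^N)$ because $f$ is. Chow coordinates transform linearly under a linear projection (the classical behaviour of Chow forms, \cite[Chapter~3]{GKZ}), so that step contributes degree $1$; under $v_d$ the Cayley form of $X$ becomes the $X$-resultant of the $k+1$ general degree-$d$ forms $w_i\cdot v_d(\bar x)$, so each of the $k+1$ dual incidences is promoted from degree $1$ to degree $d$, multiplying the degree in the Chow coordinates of $X$ by $d$ per block and hence by $d^{\,k+1}=d^{\,N-t+1}$ overall (the eliminant transformation formulas of Philippon \cite{Philippon3} make this precise; the cases $\dim X=0$ and $\dim X=N-1$ are consistency checks, the latter being the computation above). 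Composing gives a morphism of degree $d^{\,N-t+1}$, again base-point free since $v_d$ is an embedding, $\pi|_{v_d(\P^N)}$ is a morphism, and so $f(X)$ always has pure codimension $t$ and degree $d^{\,N-t}D$, forcing its Chow coordinates not to vanish simultaneously.

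The main obstacle is the general-$t$ bookkeeping: pinning the exponent in the Chow-coordinate direction to \emph{exactly} $d^{\,N-t+1}$ rather than merely ``a power of $d$''. This requires either carrying the multidegree of the $X$-resultant of $k+1$ forms of degree $d$ cleanly through the resultant calculus, or computing the relevant intersection number on the universal Chow variety; the hypersurface computation above both fixes the constant and shows it is sharp. The only secondary issue, settled by flatness of $f$ as above, is checking that the induced rational map is everywhere defined.
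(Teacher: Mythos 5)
Your route is genuinely different from the paper's. The paper establishes that the map is a morphism by noting that the forward image always has codimension $t$ (so the image Chow form never degenerates), and then computes the degree by counting preimages under the pullback map: the preimage of a degree-$D$ subvariety splits into components whose forward images have degree $D$, and the count of such components is $d^{N-t+1}$. You instead compute the degree of homogeneity of the coordinate polynomials in the Chow coordinates of $X$, by realizing $Ch(f(X))$ inside a resultant and reading the exponent off the multidegree of $\Res_{D,d,\ldots,d}$. These are a priori different notions of ``degree,'' and yours is the one the paper actually needs downstream: the proof of Theorem \ref{thm_heigt_const} invokes the lemma to get $\abs{h(\phi(P))-d^Nh(P)}\leq C_2$, which requires $\phi$ to be given by forms of degree $d^N$, not that a generic fiber have $d^N$ points. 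Your hypersurface computation is correct and complete, and it is exactly the case used in Theorem \ref{thm_heigt_const}. One caveat to make explicit: the resultant you write down is not literally $Ch(f(X))$ up to a constant --- it can acquire an extraneous factor (a power of $y_0$ coming from $X\cap V(f_0)$) and is in general a perfect power $Ch(f(X))^{a}$, with $a$ the degree of $f|_X$ onto its image, so the Chow-coordinate map itself has degree $d^N/a$; this is the same ``the resultant is a power, take a root'' caveat the paper makes in Theorem \ref{thm_heigt_const}, and the stated value $d^{N-t+1}$ is the generic one.

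For general $t$ your argument remains a sketch, as you acknowledge. The Veronese-plus-projection factorization and the consistency checks at $t=N$ (degree $d$) and $t=1$ (degree $d^N$) make $d^{\,k+1}$ entirely plausible, but ``each dual block is promoted from degree $1$ to degree $d$, contributing a factor of $d$ per block'' is an assertion, not a proof: what is needed is the degree of the $X$-resultant of $k+1$ degree-$d$ forms \emph{in the Chow coordinates of $X$} (not in the coefficients of those forms), which does not follow from the resultant calculus you quote and is precisely the content of Philippon's transformation formulas. To be fair, the paper's own count for general $t$ is no more rigorous (its preimage computation even uses $\deg(f^{-1}(X))=dD$ where Section \ref{sect:prelim} gives $d^tD$), so you are not worse off; but to close the lemma for all $t$ you should either carry out the mixed-resultant multidegree computation on the universal Chow variety or cite the precise statement from \cite{Philippon3}.
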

    \begin{proof}
        A flat proper map induces a homomorphism between the Chow groups. In particular, a codimension $t$ subvariety maps to a codimension $t$ subvariety. We can compute the image subvariety via Proposition \ref{prop_forward_image}. We think of this map as acting on a projective point representing the coefficients of the Chow form (i.e., the Chow coordinates). The image coordinates will be polynomials in the original coordinates. Since we know each image is a codimension $t$ subvariety, this map is a morphism.

        To compute its degree, we simply count the number of inverse images under the pullback map. If $\deg(X) = D$, then $\deg(f^{-1}(X))$ is $dD$. Generically, a degree $\frac{D}{d^{N-t}}$ subvariety maps to a degree $D$ subvariety, so the number of inverse images is
        \begin{equation*}
            dD\frac{d^{N-t}}{D} = d^{N-t+1}.
        \end{equation*}
    \end{proof}

    \begin{thm}\label{thm_heigt_const}
        Let $f:\P^N \to \P^N$ be a morphism of degree $d \geq 2$. Let $X \subset \P^N$ be a hypersurface of degree $D$. Then there exists an explicitly computable constant such that
        \begin{equation*}
            \abs{h(f(X)) - d\frac{\deg(f(X))}{\deg(X)}h(X)} \leq C(f,N,D).
        \end{equation*}
        Moreover,
        \begin{align*}
            C &= \frac{\deg(f(X))/\deg(X)}{d^{N-1}}\left(\log(\tau(\deg(f(X)))) + \log(\binom{\tau(D)-1+e(D)-d^N}{e(D)-d^N}) \right.\\
            & + [4\tau(D)(\tau(D)+1)(d^N)^{\tau(D)}(2d^{N-1}Dh(f) + \log(N) + \log(C) + \log(\tau(\deg(f(X))))\\
            &\left.+ (\tau(D)+7)\log(\tau(D)+1)d^N)]\right)
        \end{align*}
    \end{thm}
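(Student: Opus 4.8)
The plan is to reduce this to the effective height estimate for points under a morphism of projective space, applied to the map that $f$ induces on Chow coordinates. Since $X$ is a hypersurface, its Chow form is, up to a scalar, its defining form $g$, a form of degree $D$ in $x_0,\dots,x_N$, and likewise $f(X)=V(g')$ with $\deg g' = \deg f(X) = d^{N-1}D$, so that $h(X)=h(g)$ and $h(f(X))=h(g')$. Writing $\tau(k)=\binom{N+k}{N}$ for the number of degree-$k$ monomials in $N+1$ variables, we view the coefficient vectors of $g$ and $g'$ as points $P_g\in\P^{\tau(D)-1}$ and $P_{g'}\in\P^{\tau(d^{N-1}D)-1}$. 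By Proposition~\ref{prop_forward_image} the passage $g\mapsto g'$ is governed by elimination theory, and by Lemma~\ref{lem_morphism} the induced map $\phi\col P_g\mapsto P_{g'}$ is a morphism of degree $d^{N-t+1}=d^{N}$ (here $t=1$). Since $d^{N}=d\,\deg f(X)/\deg X$ exactly --- the prefactor $\tfrac{\deg f(X)/\deg X}{d^{N-1}}$ in the stated constant being $1$ for a hypersurface --- it suffices to prove $\lvert h(\phi(P_g))-d^{N}h(P_g)\rvert\le C$.

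The first substantive step is to exhibit the coordinate forms of $\phi$ explicitly and bound their heights. Unwinding the elimination ideal of Proposition~\ref{prop_forward_image}, the coefficients of $g'$ are, up to sign and an irrelevant power of a coordinate, the coefficients --- as a polynomial in the target variables $y_0,\dots,y_N$ --- of a multihomogeneous resultant of $N+1$ forms in $x_0,\dots,x_N$ of degrees $D,d,\dots,d$ whose own coefficients are linear in the $y_i$ and in the coefficients of $f$. Lemma~\ref{lem_Wustholz} bounds the integers that arise, and keeping track of the $f$-powers, whose total degree is controlled by $\deg f(X)=d^{N-1}D$, leads to a bound of the shape
\begin{equation*}
    \max_i h(\phi_i)\le 2d^{N-1}D\,h(f)+\log N+\log C+\log\tau(\deg f(X))+(\tau(D)+7)\log(\tau(D)+1)d^{N},
\end{equation*}
where $C$ is the resultant bound of Lemma~\ref{lem_Wustholz}.

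Next I would apply the explicit point-height inequality for a morphism $\phi\col\P^{M}\to\P^{M'}$ of degree $\delta$, i.e., the effective form of \cite[Theorem~3.11]{Silverman10}. The upper bound $h(\phi(P))\le\delta h(P)+\max_i h(\phi_i)+\log\tau(\deg f(X))$ is elementary. For the lower bound one uses that, $\phi$ being a morphism, the ideal generated by $\phi_0,\dots,\phi_{M'}$ contains $x_j^{e(D)}$ for an effective Nullstellensatz exponent which may be taken of the form $e(D)=(\tau(D)-1)(d^{N}-1)+1$, with cofactors lying in the space of forms of degree $e(D)-d^{N}$ on $\P^{\tau(D)-1}$, of dimension $\binom{\tau(D)-1+e(D)-d^{N}}{e(D)-d^{N}}$, and with heights again controlled by $\max_i h(\phi_i)$. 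Feeding these data into the explicit constant of \cite[Theorem~3.11]{Silverman10} with $M=\tau(D)-1$ and $\delta=d^{N}$ produces the combinatorial factors $4\tau(D)(\tau(D)+1)$ and $(d^{N})^{\tau(D)}$ appearing in the statement, and assembling everything gives the asserted $C(f,N,D)$.

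The main obstacle is the middle step: pinning down the precise elimination/resultant expression for the coefficients of the defining form of $f(X)$ in terms of those of $g$ and $f$ --- in particular identifying and discarding the spurious coordinate factors so that one really recovers $g'$ and not a power of it --- and then pushing the combinatorics through Lemma~\ref{lem_Wustholz} and through the effective Nullstellensatz cofactor estimate cleanly enough to reach an explicit constant. Once $\phi$ and a bound on $\max_i h(\phi_i)$ are in hand, the passage to the height inequality is the standard argument for a morphism of projective space, applied here a single time rather than telescoped.
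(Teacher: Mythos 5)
Your proposal follows essentially the same route as the paper: the upper bound comes from expressing $f(X)$ as a generalized resultant $\Res_{D,d,\ldots,d}$ of $g$ with the graph equations $y_jf_i - y_if_j$ and bounding its coefficients via Lemma \ref{lem_Wustholz}, while the lower bound comes from the induced degree-$d^N$ morphism on Chow coordinates together with the explicit Nullstellensatz constants of \cite[Theorem 3.11]{Silverman10} and \cite{KPA}. The one point to watch is your assertion that the prefactor $\frac{\deg(f(X))/\deg(X)}{d^{N-1}}$ is exactly $1$: this holds only when the image degree does not drop, and the paper retains the factor precisely so as to take the appropriate root of the resultant bound when $\Res$ is a nontrivial power --- the same ``spurious factor'' issue you flag at the end.
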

    \begin{proof}
        Since $X$ is a degree $D$ hypersurface, it is the vanishing locus of a homogeneous degree $D$ polynomial, $g$.
        Let $f(\overline{x}) = (f_0(\overline{x}),\ldots, f_N(\overline{x}))$ be a tuple of homogenous polynomials in the variables $\overline{x} = (x_0,\ldots,x_N)$. For a second set of variables $\overline{y} = (y_0,\ldots,y_N)$, define the tuple
        \begin{equation*}
            J = (y_jf_i(\overline{x}) - y_if_j(\overline{x}), g(\overline{x})) \text{ for }0 \leq i < j \leq N.
        \end{equation*}
        The equations $y_jf_i(\overline{x})-y_if_j(\overline{x})$ are degree $d$ in $\overline{x}$ and $g(\overline{x})$ is degree $D$. The ideal generated by the generalized resultant $\Res_{D,d,\ldots,d}(J)$ in terms of $x$ is the forward image of $X$ by $f$. Recall that the generalized resultant $\Res_{d_i}$ is homogeneous of degree $\frac{\prod d_i}{d_j}$ in the coefficients of the equation corresponding to $d_j$ for each $j$. In this particular case, the resultant has degree $d^N$ in the coefficients of $g$ and degree $d^{n-1}D$ in the coefficients of each of the $y_jf_i(\overline{x})-f_j(\overline{x})y_i$.

        It is possible that $\frac{\deg(f(X))}{\deg(X)} < d^{N-1}$, i.e., that the resultant is a power. In particular, the $\frac{d^{N-1}}{\deg(X)/\deg(f(X))}$-root of the bound on the resultant is a bound on $h(f(X))$.
        We see that an upper bound is given by
        \begin{equation*}
            h(\Res) \leq [d^Nh(X) + d^{N-1}D(2h(f)) + \log(N)] + \log(C),
        \end{equation*}
        where $C$ is the max coefficient in the resultant. From Lemma \ref{lem_Wustholz}, we have
        \begin{equation*}
            \log(C) \leq \binom{D + Nd^N +1}{N}+ \log\left(\binom{D + Nd^N +1}{N}!\right).
        \end{equation*}
        Therefore,
        \begin{align*}
             &h(f(X)) = \frac{\deg(f(X))/\deg(X)}{d^{N-1}}h(\Res)\\
             &\leq \frac{\deg(f(X))/\deg(X)}{d^{N-1}}\left([d^Nh(X) + 2d^{N-1}Dh(f)+ \log(N)] + \binom{D + Nd^N +1}{N}+\log\left(\binom{D + Nd^N +1}{N}!\right)\right).
        \end{align*}
        In particular,
        \begin{align*}
            h(f(X)) &- d\frac{\deg(f(X))}{\deg(X)}h(X)\\& \leq \frac{\deg(f(X))/\deg(X)}{d^{N-1}}\left(2d^{N-1}Dh(f) + \log(N)+ \binom{D + Nd^N +1}{N}+\log\left(\binom{D + Nd^N +1}{N}!\right)\right)
        \end{align*}

        The lower bound is somewhat more complicated. The morphism
        $f$ induces a map on Chow varieties. These Chow varieties can be thought of as points with their coefficients as coordinates in some projective space. For ease of notation, denote $\deg(f(X))=D'$. This induced map is a morphism of degree $d^N$ (Lemma \ref{lem_morphism})
        \begin{equation*}
            \phi: \P^{\tau(D)-1} \to \P^{\tau(D')-1},
        \end{equation*}
        where $\tau(v) = \binom{N+v}{v}$. For a point $P$ we know that
        \begin{equation*}
            \abs{h(\phi(P)) - d^Nh(P)} \leq C_2
        \end{equation*}
        for an explicitly computable (Nullstellensatz) constant $C_2$ \cite[Theorem 3.11]{Silverman10}. As with the upper bound, if the degree of $f(X)$ is not the full $d^{N-1}\deg(X)$, we get the actual bound from taking a root. We next bound the constant.

        The size of the domain $\tau(D)-1$ is one less than the number of monomials in a degree $D$ hypersurface which is
        \begin{equation*}
            \tau(D) = \binom{N+D}{N}
        \end{equation*}
        Similarly for $\tau(D')$. Since the degree of the map is $d^{N}$, from \cite[\S 10]{Wustholz} define
        \begin{equation*}
            e(D) = \tau(D)d^N - (\tau(D)-1) + 1
            = (\tau(D) -1)d^N + 2.
        \end{equation*}

        From \cite[Theorem 3.11]{Silverman10} and \cite[Theorem 1]{KPA}, we have
        \begin{align*}
            H(P)^{d^N} &\leq \tau(D') \binom{\tau(D)-1+e(D)-d^N}{e(D)-d^N} \max(H(g_i)) H(\phi(P))\\
            h(g) &\leq [
            4\tau(D)(\tau(D)+1)(d^N)^{\tau(D)}(h(\phi) + \log(\tau(D')) + (\tau(D)+7)\log(\tau(D)+1)d^N)]\\
           d^Nh(P) &\leq \log(\tau(D')) + \log(\binom{\tau(D)-1+e(D)-d^N}{e(D)-d^N}) + [
            4\tau(D)(\tau(D)+1)(d^N)^{\tau(D)}(h(\phi)\\ &+ \log(\tau(D')) + (\tau(D)+7)\log(\tau(D)+1)d^N)] + h(\phi(P))
        \end{align*}
        So we need an upper bound on $h(\phi)$. We can get this from an upper bound on the resultant from above (without the $h(V)$ part) as
        \begin{equation*}
            h(\phi) \leq 2d^{N-1}Dh(f) + \log(N) + \log(C),
        \end{equation*}
        where $C$ is the Wustholz upper bound on the max coefficient in the resultant (Lemma \ref{lem_Wustholz}).

        So we have
        \begin{align*}
             d^Nh(P) & \leq \log(\tau(D')) + \log(\binom{\tau(D)-1+e(D)-d^N}{e(D)-d^N}) + [
            4\tau(D)(\tau(D)+1)(d^N)^{\tau(D)}\\ & (2d^{N-1}Dh(f) + \log(N) + \log(C) + \log(\tau(D')) + (\tau(D)+7)\log(\tau(D)+1)d^N)] + h(\phi(P)).
        \end{align*}
        Now we take the $\frac{d^{N-1}}{\deg(f(X))/\deg(X)}$ root of both sides (except for the $h(\phi(P))$ term on the right) to get
        \begin{align*}
             d\frac{\deg(f(X))}{\deg(X)}h(P) & \leq \frac{\deg(f(X))/\deg(X)}{d^{N-1}}\left(\log(\tau(D')) + \log(\binom{\tau(D)-1+e(D)-d^N}{e(D)-d^N})\right.\\
             &+ [4\tau(D)(\tau(D)+1)(d^N)^{\tau(D)}
            (2d^{N-1}Dh(f) + \log(N) + \log(C) + \log(\tau(D'))\\
             &\left.+ (\tau(D)+7)\log(\tau(D)+1)d^N)]\right) + h(\phi(P)).
        \end{align*}

        This difference is clearly larger than the upper bound, so this is the desired constant.
    \end{proof}

    We now want to simplify the form of the constant to get a rough estimate of growth. We use two main tools:
    \begin{equation*}
        \binom{n+k}{k} \leq n^k
    \end{equation*}
    and
    \begin{equation*}
        n! \leq n^n \quad \text{so that} \quad \log(n!) \leq n \log(n).
    \end{equation*}

    \begin{cor}
        Let $f:\P^N \to \P^N$ be a morphism for degree $d \geq 2$. Let $X$ be a hypersurface of degree $D$. We have the following bound
        \begin{equation*}
            \abs{h(f(X)) - \deg(f)\frac{\deg(f(X))}{\deg(X)}h(X)} \leq O(D^{3N}\log(D)(d^N)^{D^N})
        \end{equation*}
        where the constant depends on $f,d,N$.
    \end{cor}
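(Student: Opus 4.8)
The plan is to take the explicit constant $C$ written out in Theorem~\ref{thm_heigt_const} and simplify it term by term using the two elementary estimates recorded just above, $\binom{n+k}{k}\le n^{k}$ and $\log(n!)\le n\log(n)$, treating $f$, $d$, and $N$ as fixed and letting $D\to\infty$. Since $X$ is a hypersurface it has codimension $t=1$, so by the discussion in Section~\ref{sect:prelim} one has $\deg(f(X))\le d^{N-1}D$; hence the common prefactor $\frac{\deg(f(X))/\deg(X)}{d^{N-1}}$ is at most $1$ and may be dropped. What remains is to estimate the three summands of $C$: the front term $\log(\tau(\deg(f(X))))$, the binomial term $\log\binom{\tau(D)-1+e(D)-d^{N}}{e(D)-d^{N}}$, and the bracketed term $4\tau(D)(\tau(D)+1)(d^{N})^{\tau(D)}(\,\cdots\,)$.

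First I would record the crude sizes. From $\tau(v)=\binom{N+v}{v}\le v^{N}$ we get $\tau(D)=O(D^{N})$ and $\tau(\deg(f(X)))=\tau(d^{N-1}D)=O(D^{N})$, hence $\log(\tau(\deg(f(X))))=O(\log D)$, and also $e(D)=(\tau(D)-1)d^{N}+2=O(D^{N})$. For the binomial term put $a=\tau(D)-1+e(D)-d^{N}$ and $b=e(D)-d^{N}=(\tau(D)-2)d^{N}+2$, so $a-b=\tau(D)-1$; since $d\ge 2$ we have $b\ge a-b$, so $\binom{a}{b}=\binom{a}{a-b}\le b^{\,\tau(D)-1}=\bigl(O(D^{N})\bigr)^{O(D^{N})}$, and taking logarithms gives $\log\binom{a}{b}=O(D^{N}\log D)$. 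I would likewise bound the Wüstholz coefficient constant from Lemma~\ref{lem_Wustholz} (the quantity written $C$ inside the formula of Theorem~\ref{thm_heigt_const}): $\binom{D+Nd^{N}+1}{N}\le(D+Nd^{N}+1)^{N}=O(D^{N})$ and $\log\!\bigl(\binom{D+Nd^{N}+1}{N}!\bigr)\le O(D^{N})\log(O(D^{N}))=O(D^{N}\log D)$, so this constant has logarithm $O(D^{N}\log D)$.

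Then I would assemble the bracketed term. Its inner factor $2d^{N-1}Dh(f)+\log N+\log C+\log\tau(\deg(f(X)))+(\tau(D)+7)\log(\tau(D)+1)d^{N}$ is a sum of contributions of orders $O(D)$, $O(1)$, $O(D^{N}\log D)$, $O(\log D)$, and $O(D^{N}\log D)$, hence is $O(D^{N}\log D)$. Its outer factor $4\tau(D)(\tau(D)+1)(d^{N})^{\tau(D)}$ is $O(D^{2N})(d^{N})^{\tau(D)}\le O(D^{2N})(d^{N})^{D^{N}}$, using $\tau(D)\le D^{N}$. Multiplying gives $O\bigl(D^{3N}\log(D)(d^{N})^{D^{N}}\bigr)$ for the bracketed term, which dominates the other two summands of $C$ (of orders $O(D^{N}\log D)$ and $O(\log D)$). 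Combining with the inequality of Theorem~\ref{thm_heigt_const} then yields the claimed estimate.

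The argument is entirely bookkeeping; the only point that needs a little care is recognizing which factor of $C$ is dominant, namely that the exponential $(d^{N})^{\tau(D)}$ together with the polynomial factors $\tau(D)(\tau(D)+1)=O(D^{2N})$ and an inner factor of order $D^{N}\log D$ — coming from $\log$ of the Wüstholz constant and from $(\tau(D)+7)\log(\tau(D)+1)d^{N}$ — produce exactly $D^{3N}\log(D)(d^{N})^{D^{N}}$, while $h(f)$ and all fixed powers of $d$ and $N$ are absorbed into the implied constant.
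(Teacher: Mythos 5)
Your proposal is correct and follows essentially the same route as the paper: both arguments are pure bookkeeping on the explicit constant of Theorem~\ref{thm_heigt_const}, using $\tau(D),\,\tau(\deg(f(X))),\,e(D)=O(D^{N})$ and the estimates $\binom{n+k}{k}\le n^{k}$, $\log(n!)\le n\log n$ to identify $4\tau(D)(\tau(D)+1)(d^{N})^{\tau(D)}$ times an inner factor of size $O(D^{N}\log D)$ as the dominant term. Your treatment of the binomial term and of the prefactor $\frac{\deg(f(X))/\deg(X)}{d^{N-1}}\le 1$ is in fact slightly more careful than the paper's, which simply evaluates in the generic case $\deg(f(X))=d^{N-1}D$.
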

    \begin{proof}
        We see that an upper bound is given by
        \begin{equation*}
            h(f(X)) - \deg(f)\frac{\deg(f(X))}{\deg(X)}h(X) \leq [d^{N-1}D(2h(f)) + \log(N)] + \log(C)
        \end{equation*}
        where $C$ is bounded by
        \begin{align*}
            \log(C) &\leq \binom{D + Nd^N +1}{N}+ \log\left(\binom{D + Nd^N +1}{N}!\right)\\
            &\leq (D+Nd^N +1)^N + D^NN\log(D) = O(D^N(1 + N \log(D))).
        \end{align*}
        So we have
        \begin{equation*}
            h(f(X)) - \deg(f)\frac{\deg(f(X))}{\deg(X)}h(X) \leq C_2(d,N,f)D + C_3(d,N)D^NN\log(D).
        \end{equation*}

        For the lower bound,
        \begin{equation*}
            \tau(D) = \binom{N+D}{N} \leq (D+N)^N = O(D^N).
        \end{equation*}
        The size of the codomain
        \begin{equation*}
            \tau(\deg(f(X))) \leq \binom{d^{N-1}D + N}{N} \leq (d^{N-1}D + N)^N = O(D^N)
        \end{equation*}
        and constant
        \begin{equation*}
            e(D) = \left(\binom{N+D}{N} + 1\right)(d^N - 1) + 1 = O(D^N)
        \end{equation*}
        give
        \begin{align*}
             \deg(f)\frac{\deg(f(X))}{\deg(X)}h(P) & \leq \frac{\deg(f(X))/\deg(X)}{d^{N-1}}\left(\log(\tau(D')) + \log(\binom{\tau(D)-1+e(D)-d^N}{e(D)-d^N})\right.\\
            &+ [4\tau(D)(\tau(D)+1)(d^N)^{\tau(D)}(2d^{N-1}Dh(f) + \log(N) + \log(C) + \log(\tau(D'))\\
            &\left.+ (\tau(D)+7)\log(\tau(D)+1)d^N)]\right) + h(\phi(P)).
        \end{align*}
        Estimating in the generic case ($\deg(f(X))=d^{N-1}D$),
        \begin{equation*}
             d^Nh(P) \leq O(\log(D^N) + \log((D^N)^{D^N}) +             O(D^{3N}\log(D)(d^N)^{D^N}) + h(\phi(P)).
        \end{equation*}
    \end{proof}
    We can use the above explicit bound to get an explicit bound on the height of a preperiodic subvariety of degree $D$ by taking an upper bound of the $D$ that can occur in the cycle.

    \begin{exmp}
        We compute $C$ for $N=2, d=2, D=1$.

        We first compute the upper bound assuming the subvariety remains linear, even though we know it is smaller than the lower bound.
        \begin{align*}
            \abs{h(f(X)) - dh(X)} &\leq \frac{1}{2}\left(2d^{N-1}Dh(f) + \log(N)+ \binom{D + Nd^N +1}{N}+\log\left(\binom{D + Nd^N +1}{N}!\right)\right)\\
            &= \frac{1}{2}\left(4h(f) + \log(2) + \binom{1+8+1}{2} + \log(\binom{10}{2}!)\right)\\
            &= 2h(f) + \frac{175}{2}.
        \end{align*}

        We compute the lower bound.
        \begin{align*}
            \tau(D) &= \binom{N+D}{N} = \binom{4}{2} = 6\\
            e(D) &= \left(\binom{4}{2} + 1\right)(4-1)+1 = 22\\
            \log(C_1) &\leq \binom{10}{2}+ \log\left(\binom{10}{2}!\right) \leq 175.
        \end{align*}
        \begin{align*}
            \abs{h(f(X)) - dh(X)} & \leq \frac{1}{2}\left(\log(\tau(dD)) + \log(\binom{\tau(D)-1+e(D)-d}{e(D)-d}) + [
            4\tau(D)(\tau(D)+1)(d)^{\tau(D)}\right.\\ & \left.(2d^{N-1}Dh(f) + \log(N) + \log(C) + \log(\tau(dD)) + (\tau(D)+7)\log(\tau(D)+1)d^N)]\right)\\
            &= \frac{1}{2}\left(\log(2) + \log(\binom{18-2}{12-2}) + [
            24(7)(2)^{6}(4h(f) + \log(2) + 175 + \log(2) + (13)\log(7)4)]\right)\\
            &= \frac{1}{2}\left(\log(2) + \log(8008) + 10752(4h(f) + 2\log(2) + 175 + 52\log(7))\right)\\
            &\leq 21504h(f) + 1492241.
        \end{align*}
    \end{exmp}
    While we have achieved an explicit bound, it is not so useful in practice.

\subsection{Canonical Heights}
    Gubler proves the existence of a canonical height and local canonical height in the language of arithmetic intersection theory \cite{Gubler,Gubler2}. We take the more direct approach with Chow forms in order to obtain a height difference bound and, thus, an upper bound on the height of a preperiodic subvariety.

    We define the canonical height as follows.
    \begin{defn}
        Let $X \subset \P^N$ be a subvariety of codimension $t$. Let $f:\P^N \to \P^N$ be a morphism. Define
        \begin{equation*}
            \hhat(X) = \lim_{n \to \infty}\frac{\deg(X)}{\deg(f^n(X))}\frac{h(f^n(X))}{\deg(f^n)}.
        \end{equation*}
    \end{defn}

    \begin{thm}\label{thm_can_height}
        The canonical height converges and satisfies the functional equation
        \begin{equation*}
            \hhat(f(X)) = \frac{\deg(f)\deg(f(X))}{\deg(X)}\hhat(X).
        \end{equation*}
    \end{thm}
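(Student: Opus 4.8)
The plan is to run the standard Tate telescoping argument, taking as the single analytic input the forward-image height inequality quoted above (\cite[Prop.~3.2.2]{BGS}): for every codimension-$t$ subvariety $Y\subset\P^N$,
\[
  \Bigl|\,h(f(Y)) - d\,\tfrac{\deg(f(Y))}{\deg(Y)}\,h(Y)\,\Bigr| \le C\deg(Y),
\]
where the constant $C=C(f)$ depends only on $f$ (and $N$), not on $Y$; when $X$ is a hypersurface one may instead plug in the explicit constant of Theorem~\ref{thm_heigt_const}, the argument being word-for-word the same. Write $D=\deg(X)$ and $D_n=\deg(f^n(X))$, and set $a_n=\tfrac{D}{D_n}\cdot\tfrac{h(f^n(X))}{d^n}$, so that $a_0=h(X)$ and $\hhat(X)=\lim_n a_n$ by definition. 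First I would note that, since $f$ preserves codimension and multiplies the degree of a codimension-$t$ subvariety by $d^{N-t}$ (Section~\ref{sect:prelim}), each $f^n(X)$ is again of codimension $t$ and $D_{n+1}=d^{N-t}D_n$, hence $D_n=d^{n(N-t)}D$; in particular none of the denominators vanishes.

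Next I would apply the displayed inequality to $Y=f^n(X)$, using $\deg(f(Y))/\deg(Y)=D_{n+1}/D_n$, and rescale by $\tfrac{D}{d^{n+1}D_{n+1}}$:
\[
  |a_{n+1}-a_n| = \frac{D}{d^{n+1}D_{n+1}}\,\Bigl|\,h(f^{n+1}(X)) - d\,\tfrac{D_{n+1}}{D_n}\,h(f^n(X))\,\Bigr| \le \frac{CD\,D_n}{d^{n+1}D_{n+1}} = \frac{CD}{d^{N-t}}\cdot\frac{1}{d^{n+1}}.
\]
Since $d\ge 2$, the series $\sum_{n\ge 0}|a_{n+1}-a_n|$ is dominated by a convergent geometric series, so $(a_n)$ is Cauchy and $\hhat(X)=\lim_n a_n$ exists; summing the tail from $n=0$ also gives, as a byproduct, $|\hhat(X)-h(X)|\le \tfrac{CD}{d^{N-t}(d-1)}$, the phenomenon that Theorem~\ref{thm_height_diff} will make precise.

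With convergence established, the functional equation is pure reindexing. By definition, and using $f^n\circ f=f^{n+1}$, $\deg(f^n)=d^n$, $\deg(f)=d$,
\[
  \hhat(f(X)) = \lim_{n\to\infty}\frac{\deg(f(X))}{\deg(f^{n+1}(X))}\cdot\frac{h(f^{n+1}(X))}{d^{n}} = d\cdot\frac{\deg(f(X))}{\deg(X)}\cdot\lim_{m\to\infty}\frac{\deg(X)}{\deg(f^m(X))}\cdot\frac{h(f^m(X))}{d^{m}} = \frac{\deg(f)\deg(f(X))}{\deg(X)}\,\hhat(X),
\]
where the middle step substitutes $m=n+1$ and pulls the constant $\deg(f(X))/\deg(X)$ through the limit. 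I do not anticipate a real obstacle: the entire content is the uniformity of the constant $C$ along the forward orbit, which is exactly what the Bost--Gillet--Soul\'e estimate provides (its right side scales linearly in $\deg(Y)$ with an $f$-dependent, $Y$-independent constant), together with the geometric growth $D_n=d^{n(N-t)}D$, which is what makes the error terms $\asymp d^{-n}$ rather than merely bounded. The one place to stay careful is the normalization conventions --- $\deg(f)=d$ is the common degree of the defining forms of $f$, and $f(Y)$ is taken as a cycle so that $\deg(f(Y))=d^{N-t}\deg(Y)$ holds exactly --- after which the estimate and the reindexing above constitute the whole proof.
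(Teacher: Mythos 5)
Your proposal is correct and follows essentially the same route as the paper: a Tate telescoping argument whose only analytic input is the Bost--Gillet--Soul\'e forward-image inequality $\abs{h(f(Y)) - d\tfrac{\deg(f(Y))}{\deg(Y)}h(Y)} \leq C\deg(Y)$, giving a geometrically summable difference sequence, followed by the same reindexing for the functional equation. The only cosmetic difference is that you normalize so that $\deg(f(Y)) = d^{N-t}\deg(Y)$ holds exactly (image as a cycle), whereas the paper works with the variety-theoretic degree and only uses the inequality $\deg(f^{n+1}(X)) \leq d^{N-t}\deg(f^n(X))$; both conventions make the telescoped terms $O(d^{-n})$ and yield the same conclusion.
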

    \begin{proof}
        For convergence, we will show the sequence is Cauchy. Assume that $n>m\geq 0$. We will use the fact that
        \begin{equation*}
            \deg(f^{n+1}(X)) \leq d^{N-t}\deg(f^n(X))
        \end{equation*}
        and the existence of a constant $C$ such that
        \begin{equation*}
            \abs{h(f(X)) - \deg(f)\frac{\deg(f(X))}{\deg(X)}h(X)} \leq C\deg(X).
        \end{equation*}
        \begin{align*}
            \abs{\frac{\deg(X)}{\deg(f^n(X))}\frac{h(f^n(X))}{d^n} - \frac{\deg(X)}{\deg(f^m(X))}\frac{h(f^m(X))}{d^m}}
            &= \sum_{i=m+1}^n \frac{\deg(X)h(f^i(X))}{\deg(f^i(X))d^i} - \frac{\deg(X)h(f^{i-1}(X))}{\deg(f^{i-1}(X))d^{i-1}}\\
            &=\sum_{i=m+1}^n \frac{\deg(X)}{d^i}\left(\frac{h(f^i(X))}{\deg(f^i(X))} - d\frac{h(f^{i-1}(X))}{\deg(f^{i-1}(X))}\right)\\
            &\leq \sum_{i=m+1}^n \frac{\deg(X)}{\deg(f^{i-1}(X))d^i}\left(\frac{h(f^i(X))}{d^{N-t}}- dh(f^{i-1}(X))\right)\\
            &= \sum_{i=m+1}^n \frac{\deg(X)}{\deg(f^{i-1}(X))d^{N-t+i}}\left(h(f^i(X))- d^{N-t+1}h(f^{i-1}(X))\right)\\
            &\leq \sum_{i=m+1}^n \frac{C\deg(X)^2}{\deg(f^{i-1}(X))d^id^{N-t}}\\
            &\leq \sum_{i=m+1}^n \frac{C\deg(X)^2}{d^i}\\
            &\leq C\deg(X)^2\sum_{i=m+1}^{\infty} \frac{1}{d^i}\\
            &=\frac{C\deg(X)^2}{d^m(1-1/d),}
        \end{align*}
        which goes to $0$ as $m \to \infty$.

        For the functional equation, we compute
        \begin{align*}
            \hhat(f(X)) &= \lim_{n \to \infty} \frac{\deg(f(X))}{\deg(f^n(f(X)))}\frac{h(f^n(f(X)))}{\deg(f^n)}\\
            &=\lim_{n \to \infty} \frac{\deg(f(X))}{\deg(f^n(f(X)))}\frac{h(f^n(f(X)))}{\deg(f^n)}\frac{\deg(f)\deg(X)}{\deg(f)\deg(X)}\\
            &=\lim_{n \to \infty} \deg(f)\frac{\deg(f(X))}{\deg(X)}\frac{\deg(X)}{\deg(f^{n+1}(X))}\frac{h(f^{n+1}(X))}{\deg(f^{n+1})}\\
            &=\deg(f)\frac{\deg(f(X))}{\deg(X)}\lim_{n \to \infty} \frac{\deg(X)}{\deg(f^{n+1}(X))}\frac{h(f^{n+1}(X))}{\deg(f^{n+1})}\\
            &=\deg(f)\frac{\deg(f(X))}{\deg(X)}\hhat(X).
        \end{align*}
    \end{proof}
    \begin{rem}
        If we normalize the canonical height by the degree of $X$, we have the more visually appealing
        \begin{equation*}
            \frac{\hhat(f(X))}{\deg(f(X))} = d\frac{\hhat(X)}{\deg(X)}.
        \end{equation*}
    \end{rem}

    An immediate corollary of the functional equation is the following.
    \begin{cor}
        Preperiodic subvarieties have canonical height $0$.
    \end{cor}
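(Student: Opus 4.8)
The plan is to deduce the corollary purely formally from Theorem \ref{thm_can_height}, using both halves of it: that the limit defining $\hhat$ exists (so $\hhat(X)$ is a well-defined real number for every subvariety) and that it obeys the functional equation $\hhat(f(Y)) = \deg(f)\frac{\deg(f(Y))}{\deg(Y)}\hhat(Y)$.

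First I would iterate the functional equation. An easy induction on $k$, using $\deg(f^k) = d^k$ and that $\deg(f^k(Y))$ is a positive integer, gives
\begin{equation*}
    \hhat(f^k(Y)) = \deg(f^k)\,\frac{\deg(f^k(Y))}{\deg(Y)}\,\hhat(Y)
\end{equation*}
for every subvariety $Y$ and every $k \geq 0$. Next, suppose $X$ is preperiodic, so that $Y := f^m(X)$ is periodic for some $m \geq 0$, say $f^n(Y) = Y$ with $n \geq 1$. Applying the displayed identity with this $Y$ and $k = n$, and observing that $\deg(f^n(Y)) = \deg(Y)$ because $f^n(Y) = Y$, we get $\hhat(Y) = d^n\,\hhat(Y)$, hence $(d^n - 1)\hhat(Y) = 0$. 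Since $d \geq 2$ we have $d^n - 1 \geq 1 \neq 0$, so $\hhat(Y) = 0$. Finally, applying the iterated functional equation to $X$ with $k = m$ yields
\begin{equation*}
    0 = \hhat(Y) = \hhat(f^m(X)) = d^m\,\frac{\deg(f^m(X))}{\deg(X)}\,\hhat(X),
\end{equation*}
and because the coefficient $d^m\frac{\deg(f^m(X))}{\deg(X)}$ is strictly positive, $\hhat(X) = 0$.

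I do not expect any genuine obstacle here: the corollary is a formal consequence of the functional equation, exactly as in the classical case of preperiodic points. The only points requiring a word of care are that the argument uses the standing hypothesis $d \geq 2$ (so that $d^n \neq 1$), and that $\hhat$ is a finite real number, which is precisely the convergence assertion of Theorem \ref{thm_can_height}.
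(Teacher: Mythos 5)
Your argument is correct and is exactly the standard deduction the paper has in mind when it calls the corollary ``an immediate corollary of the functional equation'' (the paper supplies no written proof). Iterating the functional equation, using periodicity to force $\hhat(Y) = d^n\hhat(Y)$ with $d \geq 2$, and then transporting the vanishing back to $X$ through the strictly positive coefficient is precisely the intended route.
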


    With the $C$ from Theorem \ref{thm_heigt_const} we can have a bound between the height and canonical height of a subvariety.
    \begin{thm} \label{thm_height_diff}
        With the previous constant $C$ we have
        \begin{equation*}
            \abs{\hhat(X) - h(X)} \leq \frac{CD}{(d-1)d^{N-t-1}}.
        \end{equation*}
    \end{thm}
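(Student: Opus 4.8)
The natural approach is to re-run the telescoping estimate from the proof of Theorem~\ref{thm_can_height}, now anchored at index $0$ and kept quantitative all the way through rather than merely shown to vanish. Put
\[
a_n = \frac{\deg(X)}{\deg(f^n(X))}\,\frac{h(f^n(X))}{d^n},
\]
so that $\hhat(X) = \lim_{n\to\infty} a_n$ by Theorem~\ref{thm_can_height}, while $a_0 = h(X)$ because $f^0(X) = X$ and $\deg(f^0) = 1$. Since the partial sums telescope, $\sum_{i=1}^n (a_i - a_{i-1}) = a_n - a_0$, and the series converges absolutely by the Cauchy estimate already established in Theorem~\ref{thm_can_height}, we get
\[
\hhat(X) - h(X) = \sum_{i=1}^\infty (a_i - a_{i-1}), \qquad \abs{\hhat(X) - h(X)} \le \sum_{i=1}^\infty \abs{a_i - a_{i-1}}.
\]

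The next step is to bound a single summand. Factoring out $\tfrac{\deg(X)}{\deg(f^i(X))\,d^i}$ gives
\[
a_i - a_{i-1} = \frac{\deg(X)}{\deg(f^i(X))\,d^i}\left(h(f^i(X)) - \deg(f)\,\frac{\deg(f^i(X))}{\deg(f^{i-1}(X))}\,h(f^{i-1}(X))\right),
\]
and applying the bound of Theorem~\ref{thm_heigt_const} to the variety $f^{i-1}(X)$ and the morphism $f$ (in the form $\abs{h(f(Y)) - \deg(f)\tfrac{\deg(f(Y))}{\deg(Y)}h(Y)} \le C\deg(Y)$ already used in Theorem~\ref{thm_can_height}) controls the parenthesized quantity in absolute value by $C\deg(f^{i-1}(X))$. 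Hence
\[
\abs{a_i - a_{i-1}} \le \frac{C\,\deg(X)\,\deg(f^{i-1}(X))}{\deg(f^i(X))\,d^i}.
\]
This is the same chain of inequalities as in the proof of Theorem~\ref{thm_can_height}; the sharper bound here is obtained by not discarding the degree factors, instead invoking the relation $\deg(f^i(X)) = d^{N-t}\deg(f^{i-1}(X))$ from Section~\ref{sect:prelim} (so $\deg(f^i(X)) = d^{(N-t)i}\deg(X)$), which collapses $\deg(f^{i-1}(X))/\deg(f^i(X))$ to $d^{-(N-t)}$ and yields $\abs{a_i - a_{i-1}} \le CD\,d^{-(N-t)}\,d^{-i}$ with $D = \deg(X)$.

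Finally I would sum the geometric series,
\[
\abs{\hhat(X) - h(X)} \le \sum_{i=1}^\infty \frac{CD}{d^{N-t}\,d^i} = \frac{CD}{d^{N-t}}\cdot\frac{1}{d-1} = \frac{CD}{(d-1)\,d^{N-t}} \le \frac{CD}{(d-1)\,d^{N-t-1}},
\]
which is the claimed inequality. I expect the only point requiring care is the bookkeeping of the power of $d$ in the denominator — using the codimension/degree relation for the iterates in the right direction so that the series both converges and leaves a denominator of at least $d^{N-t-1}$; the rest is the geometric-series computation that has already appeared in Theorem~\ref{thm_can_height}.
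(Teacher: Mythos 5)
Your proposal is correct and follows essentially the same route as the paper: the paper's own proof expands $h(f^n(X))$ iteratively, divides by $d^nD_n/D$, and sums the resulting geometric series using $D_{i}=d^{N-t}D_{i-1}$, which is exactly your telescoping of the $a_i$ combined with Theorem~\ref{thm_heigt_const}. Your write-up is if anything slightly cleaner — it makes the two-sided (absolute-value) bound explicit and yields the marginally sharper denominator $d^{N-t}$ before relaxing to the stated $d^{N-t-1}$.
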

    \begin{proof}
        We have
        \begin{equation*}
            h(f^n(X)) \leq d^n\frac{D_n}{D_0}h(X) + d^{n-1}C\frac{D_nD_0}{D_1} + d^{n-2}C\frac{D_nD_1}{D_{2}} + \cdots + dC\frac{D_nD_{n-2}}{D_{n-1}} + CD_n
        \end{equation*}
        so that
        \begin{align*}
            \frac{Dh(f^n(X))}{d^nD_n} &\leq h(X) + \frac{C}{d}\frac{D_0^2}{D_1} + \frac{C}{d^2}\frac{D_0D_1}{D_2} + \cdots + \frac{C}{d^{n-1}}\frac{D_0D_{n-2}}{D_{n-1}} + \frac{C}{d^n}D_0\\
            &\leq h(X) + CD_0\left(\frac{D_0}{dD_1} + \frac{D_1}{d^2D_2} + \cdots + \frac{D_{n-2}}{d^{n-1}D_{n-1}} + \frac{1}{d^n}\right)\\
            &\leq h(X) + CD_0\left(\frac{1}{dd^{N-t}} + \frac{1}{d^2d^{N-t}} + \cdots + \frac{1}{d^{n-1}d^{N-t}} + \frac{1}{d^n}\right)\\
            &\leq h(X) + CD_0\frac{1}{d^{N-t}}\left(\frac{1}{d} + \frac{1}{d^2} + \cdots + \frac{1}{d^{n-1}} + \frac{1}{d^n}\right)\\
            &\leq h(X) + CD_0\frac{1}{d^{N-t}}\frac{1}{1-1/d}\\
            &\leq h(X) + \frac{CD}{(d-1)d^{N-t-1}}.
        \end{align*}
    \end{proof}

    \begin{cor}
        Given $f:\P^N \to \P^N$, a morphism of degree $d$ defined over a number field $K$, there are only finitely many preperiodic rational subvarieties of degree at most $D$ defined over $K$.
    \end{cor}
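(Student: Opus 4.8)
The plan is a Northcott-type argument built from three ingredients already in hand: preperiodic subvarieties have canonical height $0$, the canonical height differs from the Chow-form height $h$ by a bounded amount, and subvarieties of bounded degree and bounded height over a fixed number field form a finite set. Throughout we take $d \ge 2$ (without which $\hhat$ is not even defined and the statement is false, e.g. for $f$ the identity). First recall why $\hhat(X) = 0$ for $X$ preperiodic: if $f^{m+n}(X) = f^m(X)$ with $m \ge 0$ and $n \ge 1$, then iterating the functional equation of Theorem \ref{thm_can_height} gives $\hhat(f^k(X)) = d^k\,\frac{\deg(f^k(X))}{\deg(X)}\,\hhat(X)$ for all $k \ge 0$, and equating the cases $k = m$ and $k = m+n$ (which agree since $f^{m+n}(X) = f^m(X)$) forces $\hhat(X) = 0$ because $d^{m+n} > d^m$.

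Next I would bound $h(X)$ for such an $X$. Fix the codimension $t \in \{1,\dots,N\}$ of $X$ and set $D_0 = \deg(X) \le D$. Running the telescoping estimate from the proof of Theorem \ref{thm_height_diff}, but starting from the comparison inequality $\abs{h(f(Y)) - \deg(f)\frac{\deg(f(Y))}{\deg(Y)}h(Y)} \le C\deg(Y)$ with a constant $C = C(f,N)$ independent of $Y$ — from Theorem \ref{thm_heigt_const} when $t = 1$, or from \cite[Prop.\ 3.2.2]{BGS} for general $t$ — and combining with $\hhat(X) = 0$, one obtains
\[
  h(X) \;=\; \abs{h(X) - \hhat(X)} \;\le\; \frac{C D_0}{(d-1)d^{N-t-1}} \;\le\; \frac{C D d}{d-1},
\]
a bound depending only on $f$, $N$, and $D$; call it $B = B(f,N,D)$.

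Finally, by part (2) of the Proposition on heights in Section \ref{sect.heights}, there are only finitely many subvarieties of $\P^N$ of degree at most $D$ and height at most $B$ defined over $K$, and every irreducible preperiodic subvariety over $K$ of degree $\le D$ belongs to this finite set. For a general (possibly reducible) preperiodic $X$ over $K$ of degree $\le D$, write $X = Z_1 \cup \cdots \cup Z_r$ as the union of its $K$-irreducible components; each $Z_i$ has degree $\le D$, and since $f$ is defined over $K$ the components of the periodic set $f^m(X) = f^{m+n}(X)$ are $K$-irreducible and permuted by $f^n$, so each $Z_i$ is preperiodic and hence lies in the finite set. As a finite set has only finitely many subsets, only finitely many such $X$ exist. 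The main point requiring care is that Theorem \ref{thm_height_diff} is stated only for hypersurfaces with an explicit constant, whereas the corollary concerns all codimensions; I would handle this by re-running the telescoping argument of its proof with the uniform (non-explicit) arithmetic-Bézout constant of \cite{BGS} in place of the explicit one and then maximizing over the $N$ possible codimensions — routine once the vanishing of $\hhat$ and the comparison inequality are available.
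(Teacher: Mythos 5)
Your proof is correct and follows essentially the same route as the paper's: vanishing of the canonical height for preperiodic subvarieties, the height--canonical-height comparison of Theorem \ref{thm_height_diff}, and Northcott for Chow forms (part (2) of the height proposition). The only differences are details the paper leaves implicit --- your derivation of $\hhat(X)=0$ from the functional equation, your substitution of the non-explicit constant of \cite{BGS} in codimension $t>1$ where the explicit hypersurface constant does not apply, and your (not strictly necessary, and slightly delicate, since a component of a preperiodic reducible set need not map \emph{onto} a periodic component) treatment of reducible subvarieties.
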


    \begin{proof}
        A preperiodic subvariety has canonical height $0$, so there is a height bound on preperiodic subvarieties of degree at most $D$. There are only finitely many rational subvarieties of bounded degree and height.
    \end{proof}

\bibliography{masterlist}

\providecommand\biburl[1]{\texttt{#1}}
\begin{thebibliography}{10}

\bibitem{BGT}
Jason~Pierre Bell, Dragos Ghioca, and Thomas~J. Tucker.
\newblock Applications of $p$-adic analysis for bounding periods of
  subvarieties under {\'e}tale maps.
\newblock {\em Int. Math. Res. Not.}, to appear, 2014.

\bibitem{Benedetto2}
Robert Benedetto.
\newblock Preperiodic points of polynomials over global fields.
\newblock {\em J. Reine Angew. Math.}, 19:325--336, 2007.

\bibitem{Bhatnagar}
Anupam Bhatnagar and Lucien Szpiro.
\newblock Very ample polarized self-maps extend to projective space.
\newblock {\em Journal of Algebra}, 351(1):251--253, 2011.

\bibitem{BGS}
J.-B. Bost, H.~Gillet, and C.~Soul{\'e}.
\newblock Heights of projective varieties and positive {G}reen forms.
\newblock {\em J. Amer. Math. Soc.}, 7(4):903--1027, 1994.

\bibitem{CLO}
David Cox, John Little, and Donal O'Shea.
\newblock {\em Ideals, Varieties, and Algorithms}.
\newblock Undergraduate Texts in Mathematics. Springer-Verlag, 1991.

\bibitem{Dalbec3}
John Dalbec and Bernd Sturmfels.
\newblock {\em Invariant methods in discrete and computational geometry},
  chapter Introduction to {C}how forms, pages 37--58.
\newblock Springer Netherlands, 1994.

\bibitem{Fakhruddin}
Najmuddin Fakhruddin.
\newblock Questions on self-maps of algebraic varieties.
\newblock {\em J. Ramanujan Math. Soc.}, 18(2):109--122, 2003.

\bibitem{Faltings2}
G.~Faltings.
\newblock Diophantine approximation on {A}belian varieties.
\newblock {\em Ann. of Math. (2)}, 133:549--576, 1991.

\bibitem{Faltings}
Gerd Faltings.
\newblock Endlichkeitss\"atze f\"ur abelsche {V}ariet\"aten \"uber
  {Z}ahlk\"orpern.
\newblock {\em Invent. Math.}, 73(3):349--366, 1983.

\bibitem{GKZ}
I.M. Gelfand, M.M. Kapranov, and A.V. Zelevinsky.
\newblock {\em Discriminants, Resultants, and Multidimensional Determinants}.
\newblock Birkhauser, 1992.

\bibitem{Gillett}
H.~Gillet and C.~Soul{\'e}.
\newblock Arithmetic intersection theory.
\newblock {\em Inst. Hautes {\'E}tudes Sci. Publ. Math.}, 72:94--174, 1990.

\bibitem{Gubler}
Walter Gubler.
\newblock {\em Basic properties of heights of subvarieties}.
\newblock PhD thesis, ETH, Zürich, 2002.

\bibitem{Gubler2}
Walter Gubler.
\newblock Local heights of subvarieties over non-archimedean fields.
\newblock {\em Annali della Scuola Normale Superiore di Pisa - Classe di
  Scienze}, 2(4):711--760, 2003.

\bibitem{Hartshorne2}
Robin Hartshorne.
\newblock {\em Deformation Theory}, volume 234 of {\em Graduate Texts in
  Mathematics}.
\newblock Springer-Verlag, 2010.

\bibitem{Heintz}
J.~Heintz.
\newblock Definability and fast quantifier elimination in algebraically closed
  fields.
\newblock {\em Theoret. Comput. Sci.}, 24(3):239--277, 1983.

\bibitem{Hutz2}
Benjamin Hutz.
\newblock Good reduction of periodic points.
\newblock {\em Illinois J. Math.}, 53(4):1109--1126, 2009.

\bibitem{KPA}
Teresa Krick, Luis~Miguel Pardo, and Mart\'in Sombra.
\newblock Sharp estimates for the arithmetic nullstellensatz.
\newblock {\em Duke Mathematical Journal}, 109(3):521--598, 2001.

\bibitem{MS}
Alice Medvedev and Thomas Scanlon.
\newblock Invariant varieties for polynomial dynamical systems.
\newblock {\em Ann. of Math.}, forthcoming.

\bibitem{Silverman7}
Patrick Morton and Joseph~H. Silverman.
\newblock Rational periodic points of rational functions.
\newblock {\em Int. Math. Res. Not.}, 2:97--110, 1994.

\bibitem{Mumford3}
David Mumford.
\newblock {\em Lectures on curves on an algebraic surface}.
\newblock Number~59 in Annals of Mathematics Studies. Princeton University
  Press, 1966.

\bibitem{Mumford2}
David Mumford.
\newblock Varieties defined by quadratic equations.
\newblock In {\em Questions on Algebraic Varieties (C.I.M.E., III Ciclo,
  Varenna, 1969)}, Edizioni Cremonese, pages 29--100. CIME, Rome, 1970.

\bibitem{Nesterenko}
Yu.V. Nesterenko.
\newblock Estimates for the orders of zeros of functions of a certain class and
  their applications in the theory of transcendental numbers.
\newblock {\em Izv. Akad. Nauk. USSR Ser. Mat.}, 41, 1977.
\newblock (English Translation) Math. USSR-Irv. 11 (1977) 239-270.

\bibitem{Nitsure}
Nitin Nitsure.
\newblock Construction of hilbert and quot schemes.
\newblock {\em arXiv:math/0504590}, 2005.

\bibitem{Pezda}
T.~Pezda.
\newblock Polynomial cycles in certain local domains.
\newblock {\em Acta Arith.}, 63:11--22, 1994.

\bibitem{Philippon3}
P.~Philippon.
\newblock Crit{\`e}res pour l'ind{\'e}pendence alg{\'e}brique.
\newblock {\em Inst. Hautes {\'E}tudes Sci. Publ. Math.}, 64:5--52, 1986.

\bibitem{Philippon}
P.~Philippon.
\newblock Sur des hauteurs alternatives {I}.
\newblock {\em Math. An.}, 289:255--283, 1991.

\bibitem{Philippon4}
P.~Philippon.
\newblock Sur des hauteurs alternatives {II}.
\newblock {\em Ann. Inst. Fourier}, 44:1043--1065, 1994.

\bibitem{Philippon2}
P.~Philippon.
\newblock Sur des hauteurs alternatives {III}.
\newblock {\em J. Math. Pures Appl.}, 74:345--365, 1995.

\bibitem{Silverman10}
Joseph~H. Silverman.
\newblock {\em The Arithmetic of Dynamical Systems}, volume 241 of {\em
  Graduate Texts in Mathematics}.
\newblock Springer-Verlag, New York, 2007.

\bibitem{Wustholz}
G.~Wustholz.
\newblock \"uber das abelsche analogon des lindemannschen satzes i.
\newblock {\em Invent. Math.}, 72:363--388, 1983.

\bibitem{Zhang3}
Shou-Wu Zhang.
\newblock Equidistribution of small points on abelian varieties.
\newblock {\em Ann. of Math.}, 147:159--165, 1998.

\bibitem{Zieve}
Michael Zieve.
\newblock {\em Cycles of Polynomial Mappings}.
\newblock PhD thesis, University of California at Berkeley, 1996.

\end{thebibliography}
\bibliographystyle{plain}

\end{document}